\documentclass[12pt]{amsart}
\usepackage{epsfig} % sollte vor german stehen
\usepackage{color}
\usepackage{psfrag}
\usepackage{graphicx}
\usepackage{amssymb}
\usepackage{amsmath}
\usepackage{latexsym}
\usepackage{enumerate}
\usepackage[mathscr]{eucal}
\usepackage[isolatin]{inputenc}
\usepackage{upref}

\makeatletter
\@namedef{subjclassname@2010}{%
 \textup{2010} Mathematics Subject Classification}
\makeatother

\newtheorem{thm}{Theorem}[section]

\newtheorem{lem}[thm]{Lemma}

\theoremstyle{definition}
\newtheorem{defin}[thm]{Definition}
\newtheorem{rem}[thm]{Remark}
\newtheorem{exa}[thm]{Example}

\numberwithin{equation}{section}

 \newcommand{\setN}{\mathbb{N}}

\begin{document}

\baselineskip=17pt

\title[]
{On a new theory of models\\ for formal mathematical systems}

\author{Matthias Kunik}
\address{Universit\"{a}t Magdeburg\\
IAN \\
Geb\"{a}ude 02 \\
Universit\"{a}tsplatz 2 \\
D-39106 Magdeburg \\
Germany}
\email{matthias.kunik@ovgu.de}

\date{\today}
\maketitle

\begin{abstract}
We study a new model theory for formal mathe\-ma\-tical systems that we developed in a previous paper. We introduce isomorphic and homomorphic structures for formal languages, present some 
results and examples and conclude our paper with a discussion about the reduced set theory RST adapted to our new theory.
\end{abstract}

{\bf Keywords:} Formal mathematical systems, isomorphisms and\\ homomorphisms, new model theory.\\

Mathematics Subject Classification: 03C07, 03F03, 03C55\\

\section{Introduction}\label{intro}
In \cite{Ku} we have presented a unified theory for formal systems
inclu\-ding recursive systems closely related to formal grammars, including 
the predi\-cate calculus as well as a formal induction principle.
In \cite[(3.13) Induction Rule (e)]{Ku} we use an own rule of inference for formal induction and special predicate symbols for the underlying recursive systems. The remaining rules of inference \cite[(3.13) (a)-(d)]{Ku} without induction and recursive systems provide an own
Hilbert-style predicate calculus which is similar
to that of Shoenfield in his textbook \cite{Shoenfield}.
We will shortly summarize the necessary definitions 
in Section \ref{FMS}.

In \cite{Ku4} we have presented a new definition of a model for a formal mathematical system and derived some basic results,
namely the L\"owenheim-Skolem Theorem for elementary submodels and related results like the correctness of the logic calculus.
We have seen in \cite{Ku4} that it is sufficient to present
the theory of models only for formal mathematical systems without 
using the Induction Rule from \cite[(3.13)(e)]{Ku}.
In Section \ref{structures} we introduce homomorphic and isomorphic structures of formal languages, where formal languages are defined
as formal mathe\-matical systems without basis axioms.
Our theory is more general and flexible for applications as compared to the concepts in \cite{H}, and it is also better suited to a formal approach.
This is supported by the general syntax we use in our formal systems. 
We show how the new concepts fit into the existing theory and illustrate this with some examples.
We conclude our study with a discussion of the reduced axiomatic set theory RST from \cite{Kuset}, which we developed for our model theory.

\section{Formal mathematical systems}\label{FMS}
For the formal mathematical systems from \cite[Sections 3,4]{Ku}
we use five rules of inference, namely rules (a)-(e) from 
\cite[(3.13)]{Ku}. Rule (e) enables formal induction with respect to the recursively enumerable relations generated by an underlying recursive system $S$.
For the theory of models we put $S=S_{\emptyset}=[\,[\,];[\,];[\,]\,]$
in order to avoid the use of rule (e). 
Then we can shortly write $M=[A; P; B]$
instead of $M=[S_{\emptyset}; A; P ;B]$
for our formal systems. 
We denote the countably infinite alphabet of variables by
$X=\{\,{\bf x_1}\,,\,{\bf x_2}\,,\,{\bf x_3}\,,\, \ldots \,\}$.

\begin{defin} {\bf Formal mathematical systems}\\ 
Let $M = [A; P; B]$ be a formal mathematical system and ${\mathcal L}$
a given subset of $A$-lists from \cite[Section 3]{Ku} with the properties 
\begin{itemize}
\item[(i)] $X \subseteq {\mathcal L} $\,,
\item[(ii)] $\lambda \frac{\mu}{x} \in {\mathcal L}$ ~
for all $\lambda, \mu \in {\mathcal L}$, $x \in X$\,,
\item[(iii)] all formulas in $B$ contain only argument lists in ${\mathcal L}$\,.
\end{itemize}
Then $[M;{\mathcal L}]$ is also called a formal
mathematical system (with restricted argument lists ${\mathcal L}$).
A formula in $[M;{\mathcal L}]$ is a formula in $M$ 
which has only argument lists in ${\mathcal L}$\,.
A proof $[\Lambda]$ in $[M;{\mathcal L}]$ is a proof in $M$,
using rules (a)-(d) with the restrictions 
\begin{itemize}
\item[(iv)] the formulas in $[\Lambda]$ and 
the formulas $F$ and $G$ in \cite[(3.13)(a)-(d)]{Ku}
contain only argument lists in ${\mathcal L}$\,,
\item[(v)] there holds $\lambda \in {\mathcal L}$ 
for the list $\lambda$ in \cite[(3.13)(c)]{Ku}\,.
\end{itemize}
Let $\Pi(M;\mathcal{L})$ be the set of formulas provable
in $[M;\mathcal{L}]$ by using only the rules of inference (a),(b),(c),(d)
from \cite[(3.13)]{Ku}.
\end{defin}
\begin{defin}\label{henkin_def}
Let $[M;\mathcal{L}]$ be a formal mathematical system.
\begin{itemize}
\item[(a)] $[M;\mathcal{L}]$ is called \textit{contradictory} if 
$F \in \Pi(M;\mathcal{L})$ for every formula $F$ in $[M;\mathcal{L}]$,
otherwise we say that $[M;\mathcal{L}]$ is \textit{consistent}.
\item[(b)] $[M;\mathcal{L}]$ is called a \textit{Henkin system}
if for every variable $x \in X$ and for every formula $F$ in $[M;\mathcal{L}]$ 
with $\mbox{free}(F) \subseteq \{x\}$ there is a constant $c \in \mathcal{L}$
such that
$\begin{displaystyle}
\to \exists x F \, F\frac{c}{x} \,\in \, \Pi(M;\mathcal{L})\,.
\end{displaystyle}$
We obtain from \cite[(3.19)]{Ku} that the latter condition may be replaced with
\begin{align*}
\leftrightarrow \exists x F \, F\frac{c}{x} \,\in \, \Pi(M;\mathcal{L})\,.
\end{align*}
\item[(c)] $[M;\mathcal{L}]$ is called \textit{complete} if 
$$F \notin \Pi(M;\mathcal{L}) \Leftrightarrow \neg F \in \Pi(M;\mathcal{L}) $$
for every closed formula $F$ in $[M;\mathcal{L}]$\,.
\end{itemize}
\end{defin}
\begin{defin}\label{extensions}
Given are two formal mathematical systems $[M;\mathcal{L}]$ and 
$[M';\mathcal{L}']$ with $M=[A; P; B]$ and $M'=[A'; P'; B']$.
\begin{itemize}
\item[(a)] We say that $[M';\mathcal{L}']$ is an \textit{extension} 
of $[M;\mathcal{L}]$ if
$$
A \subseteq A'\,, \quad 
P \subseteq P'\,, \quad 
\mathcal{L} \subseteq \mathcal{L}' ~\textrm{~and~} ~
\Pi(M;\mathcal{L}) \subseteq \Pi(M';\mathcal{L}') \,.
$$
\item[(b)] Let $[M';\mathcal{L}']$ be an extension of 
$[M;\mathcal{L}]$. If we have in addition
$$
F \in \Pi(M';\mathcal{L}') \implies F \in \Pi(M;\mathcal{L})
$$
for all formulas $F$ in $[M;\mathcal{L}]$, then 
$[M';\mathcal{L}']$ is called a \textit{conservative extension} of 
$[M;\mathcal{L}]$. 
\end{itemize}
\end{defin}

\section{Homomorphic and isomorphic structures\\ of formal mathematical languages}
\label{structures}
Let $[M;\mathcal{L}]$ be a formal mathematical system  with $M=[A;P;B]$.\\
\textit{A model} $\mathcal{D}$ of $[M;\mathcal{L}]$  
consists of the following ingredients:
\begin{itemize}
\item[(1)] We have a nonempty set $\mathcal{D}_*$, also called the universe of the model. The members $d \in \mathcal{D}_*$ are called the individuals of the universe.
\item[(2)] For each individual $d \in \mathcal{D}_*$ 
we have exactly one name $\alpha_d$ and the set of names
$
\mathcal{N}=\{\alpha_d\,:\,d \in \mathcal{D}_*\,\}\,.
$
It is understood that different individuals have different names and that the names in $\mathcal{N}$ are different from the symbols in $[M;\mathcal{L}]$.
\item[(3)] Put $\hat{A} = A \cup \mathcal{N}$,
$\hat{M}=[\hat{A}; P; B]$ and
\begin{align*} \hat{{\mathcal L}}   = 
\{\, & \lambda \frac{\kappa_1}{x_1}...\frac{\kappa_m}{x_m} \,\,:\,\, 
\lambda \in {\mathcal L},\,
x_1,...,x_m \in X,\,\\ & \kappa_1,...,\kappa_m \in 
\mathcal{N}\,,\, m \geq 0\}\,.
\end{align*}
There results an extended mathematical system $[\hat{M};\hat{\mathcal{L}}]$, see also \cite[Corollary (4.9)]{Ku}.
Let $\hat{\mathcal{L}}_*$ be the set of all lists 
$\lambda \in \hat{\mathcal{L}}$
without variables, i.e. we have $\mbox{var}(\lambda)= \emptyset$.
\item[(4)] We have a \textit{surjective} function 
$\mathcal{D} : \hat{\mathcal{L}}_* \to \mathcal{D}_*$ with
\begin{equation*}
\mathcal{D}\left( \lambda \frac{\mu}{x}\right)=
\mathcal{D}\left( \lambda \frac{\alpha_{\mathcal{D}(\mu)}}{x}\right)
\end{equation*}
for all $\mu \in \hat{\mathcal{L}}_*$, for all variables $x \in X$
and for all $\lambda \in \hat{\mathcal{L}}$ with 
$\mbox{var}(\lambda) \subseteq \{ x \}$.
\item[(5)]To each predicate symbol $p \in P$ 
and each $n \in \setN_0=\{0,1,2,3,\ldots\}$ we assign an $n$-ary predicate $p_n \subseteq \mathcal{D}_*^n$. Especially for $n=0$ we either have a truth value
$p_0=\top$ or $p_0 = \bot$.
\footnote{If there is no risk of confusion, we may also put $\bot=\emptyset$, $\top=\{\emptyset\}=\mathcal{D}_*^0$.} 
\item[(6)] We have an extension of the function $\mathcal{D}$ which assigns a truth value
to each closed formula of $[\hat{M};\hat{\mathcal{L}}]$.
This extension is also denoted by $\mathcal{D}$ and is given by
\begin{itemize}
\item[6.1] $\begin{displaystyle}
\mathcal{D}(\sim \lambda,\mu)=\top \Leftrightarrow
\mathcal{D}(\lambda)=\mathcal{D}(\mu)
\end{displaystyle}$ 
for all $\lambda, \mu \in \hat{\mathcal{L}}_*$,
\item[6.2] $\mathcal{D}(p)=p_0 \in \{\top ,\bot\}$ for all $p \in P$,
$\begin{displaystyle}
\mathcal{D}(p \, \lambda_1,\ldots,\lambda_n)=\top \Leftrightarrow
(\mathcal{D}(\lambda_1),\ldots,\mathcal{D}(\lambda_n)) \in p_n
\end{displaystyle}$ for all $n \in \setN=\{1,2,3,\ldots\}$ and all
$\lambda_1,\ldots,\lambda_n \in \hat{\mathcal{L}}_*$\,.
\item[6.3] We have for all closed formulas $F,G \mbox{~of~} [\hat{M};\hat{\mathcal{L}}]$:\\
$\begin{displaystyle}
\mathcal{D}(\neg F)=\top \Leftrightarrow
\mathcal{D}(F)=\bot\,,
\end{displaystyle}$ \\
$\begin{displaystyle}
\mathcal{D}(\to F G)=\top \Leftrightarrow
(\mathcal{D}(F) \Rightarrow \mathcal{D}(G))\,,
\end{displaystyle}$ \\
$\begin{displaystyle}
\mathcal{D}(\leftrightarrow F G)=\top \Leftrightarrow
(\mathcal{D}(F) \Leftrightarrow \mathcal{D}(G))\,,
\end{displaystyle}$ \\
$\begin{displaystyle}
\mathcal{D}(\& \, F G)=\top \Leftrightarrow
(\mathcal{D}(F) ~\mbox{and}~ \mathcal{D}(G))\,,
\end{displaystyle}$ \\
$\begin{displaystyle}
\mathcal{D}(\vee \, F G)=\top \Leftrightarrow
(\mathcal{D}(F) ~\mbox{or}~ \mathcal{D}(G))\,.
\end{displaystyle}$ 
\item[6.4] We have for all $x \in X$ and all formulas 
$F \mbox{~of~}  [\hat{M};\hat{\mathcal{L}}]$ with $\mbox{free}(F) \subseteq \{x\}$:\\
$\begin{displaystyle}
\mathcal{D}(\forall x \, F)=\top \Leftrightarrow
\left( \mathcal{D}\left(F \frac{\lambda}{x}\right)=\top ~\mbox{for all}~
\lambda \in \hat{\mathcal{L}}_* \right) \,,
\end{displaystyle}$ \\
$\begin{displaystyle}
\mathcal{D}(\exists x \, F)=\top 
\Leftrightarrow
\left(~\mbox{there exists}~\lambda \in \hat{\mathcal{L}}_* 
~\mbox{with}~
\mathcal{D}\left(F \frac{\lambda}{x}\right)=\top
\right)\,.
\end{displaystyle}$ 
\end{itemize}
\item[(7)] Let $F$ be a formula in 
$[\hat{M};\hat{\mathcal{L}}]$ with 
$\mbox{free}(F) = \{x_1,\ldots,x_m\}$, $x_1,\ldots,x_m \in X$ 
and $m \geq 0$. We say that $F$ is valid in $\mathcal{D}$ iff
$\begin{displaystyle}
\mathcal{D}\left(
F \frac{\lambda_1}{x_1}\ldots\frac{\lambda_m}{x_m}
\right)=\top
\end{displaystyle}$
for all $\lambda_1,\ldots,\lambda_m \in \hat{\mathcal{L}}_* $.
Note that this simply means 
$\begin{displaystyle}
\mathcal{D}\left(
\forall x_1 \ldots \forall x_m F 
\right)=\top
\end{displaystyle}$\,.
\item[(8)] We require that every formula $F \in B$ is valid in 
$\mathcal{D}$. Then we say that $\mathcal{D}$ is a model
for $[M;\mathcal{L}]$.
\end{itemize}
\begin{rem} 
(a) We say that an extended mapping $\mathcal{D}$ which satisfies
only Conditions (1)-(7) is a \textit{structure} for the formal mathematical system 
$[M;\mathcal{L}]$. Finally, Condition (8) makes it a \textit{model} for
$[M;\mathcal{L}]$. 

(b) We use a Hilbert-style calculus for our formal mathematical systems. Hence we can use the substitution rule (c) and formulas with free variables in our axioms.
Let $F$ be a formula in $[\hat{\mbox{M}};\hat{\mathcal{L}}]$ and $x \in X$.
Then $F \in \Pi(\hat{\mbox{M}};\hat{\mathcal{L}})$ iff
$\forall x\, F \in \Pi(\hat{\mbox{M}};\hat{\mathcal{L}})$
from \cite[(3.11)(a),(3.13)(b)(d)]{Ku}. This matches well with Condition (7) in our definition of the models.
\end{rem}

\begin{lem}\label{structure_lemma}\cite[Lemma 3.3]{Ku4}
Let $\mathcal{D}$ be a structure for $[M;\mathcal{L}]$.\\ Then we have:
\begin{itemize}
\item[(a)] $d = \mathcal{D}\left( \alpha_d \right)$ for all $d \in \mathcal{D}_*$.
\item[(b)]For $x \in X$ and every formula $H$ in $[\hat{M};\hat{\mathcal{L}}]$
with $\mbox{free}(H) \subseteq \{x\}$ and for all $\mu \in \hat{\mathcal{L}}_*$
we have
\begin{equation*}
\mathcal{D}\left( H \frac{\mu}{x} \right) = 
\mathcal{D}\left( H \frac{\alpha_{\mathcal{D}(\mu)}}{x} \right)\,.
\end{equation*}
\end{itemize}
\end{lem}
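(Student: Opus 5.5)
Part (a) follows from surjectivity and condition (4). Part (b) is an induction on the construction of $H$ that follows the clauses 6.1--6.4.

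\emph{Part (a).} Let $d\in\mathcal{D}_*$. Since $\mathcal{D}:\hat{\mathcal{L}}_*\to\mathcal{D}_*$ is surjective, choose $\mu\in\hat{\mathcal{L}}_*$ with $\mathcal{D}(\mu)=d$. Apply condition (4) with $\lambda=x$ for some $x\in X$; here $x\in\mathcal{L}\subseteq\hat{\mathcal{L}}$ and $\mbox{var}(x)=\{x\}$. Then $x\frac{\mu}{x}=\mu$ and $x\frac{\alpha_{\mathcal{D}(\mu)}}{x}=\alpha_d$, so $d=\mathcal{D}(\mu)=\mathcal{D}(\alpha_d)$. We also need $\alpha_d\in\hat{\mathcal{L}}_*$. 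This holds because $\alpha_d=x\frac{\alpha_d}{x}$ with $x\in\mathcal{L}$, and $\alpha_d$ contains no variables.

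\emph{Part (b).} We prove the claim simultaneously for all $x$, $\mu$ and all $H$ with $\mbox{free}(H)\subseteq\{x\}$, by induction on the construction (or length) of $H$. Throughout, write $\nu=\alpha_{\mathcal{D}(\mu)}$.

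\emph{Atomic case.} Let $H=\sim\lambda_1,\lambda_2$ or $H=p\,\lambda_1,\ldots,\lambda_n$ with $\lambda_i\in\hat{\mathcal{L}}$ and $\mbox{var}(\lambda_i)\subseteq\{x\}$. Substitution acts componentwise, so $H\frac{\mu}{x}$ has arguments $\lambda_i\frac{\mu}{x}$. Condition (4) gives $\mathcal{D}(\lambda_i\frac{\mu}{x})=\mathcal{D}(\lambda_i\frac{\nu}{x})$. By 6.1 and 6.2, the truth value of an atomic formula depends only on the $\mathcal{D}$-values of its arguments, so the two truth values agree. For $n=0$ nothing is substituted.

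\emph{Propositional connectives.} If $H=\neg F$, $\to FG$, and so on, then $\mbox{free}(F),\mbox{free}(G)\subseteq\{x\}$ and substitution commutes with the connective. The claim follows from 6.3 and the induction hypothesis.

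\emph{Quantifiers.} Let $H=\forall y\,F$ or $H=\exists y\,F$.
\begin{itemize}
\item If $y=x$, then $x$ is not free in $H$. Hence $H\frac{\mu}{x}=H=H\frac{\nu}{x}$ and there is nothing to prove.
\item If $y\neq x$, then $\mbox{free}(F)\subseteq\{x,y\}$, and this is where the main obstacle lies. By 6.4, $\mathcal{D}(\forall y\,F\frac{\mu}{x})=\top$ iff $\mathcal{D}(F\frac{\mu}{x}\frac{\lambda}{y})=\top$ for all $\lambda\in\hat{\mathcal{L}}_*$. Since $\mu$ and $\lambda$ are closed, the substitutions commute: $F\frac{\mu}{x}\frac{\lambda}{y}=F\frac{\lambda}{y}\frac{\mu}{x}$. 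I will verify this from the substitution definitions in \cite{Ku}; it is also where the restriction to closed $\mu$ matters, because no variable capture can occur. The formula $F\frac{\lambda}{y}$ has free variables in $\{x\}$ and is shorter than $H$ in quantifier depth. The induction hypothesis therefore gives $\mathcal{D}(F\frac{\lambda}{y}\frac{\mu}{x})=\mathcal{D}(F\frac{\lambda}{y}\frac{\nu}{x})$. Commuting back and applying 6.4 again yields the claim; the $\exists$ case is identical.
\end{itemize}

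The induction must therefore be on the number of logical symbols (not on subformulas), and it must quantify over all formulas $H$ with $\mbox{free}(H)\subseteq\{x\}$. This is needed because the instance $F\frac{\lambda}{y}$ is not a subformula of $H$; with this measure it is covered, since substituting a closed list does not change the number of logical symbols.
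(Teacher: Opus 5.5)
Your proof is correct. Part (a) follows from surjectivity together with Condition (4) for $\lambda=x$. In (b), induction on the number of logical symbols is the right measure: string length can grow when passing to $F\frac{\lambda}{y}$, and quantifier depth alone does not drop in the propositional steps. Closedness of $\mu$ and $\lambda$ makes the two substitutions commute without capture. Closure of $\hat{\mathcal{L}}$ under substitution (Condition (3), cf.\ \cite[Corollary (4.9)]{Ku}) keeps $F\frac{\lambda}{y}$ a formula of $[\hat{M};\hat{\mathcal{L}}]$; you use this implicitly.

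The paper gives no proof of this lemma; it only cites \cite[Lemma 3.3]{Ku4}, so there is no in-text argument to compare yours with.
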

Lemma \ref{structure_lemma} is used to prove the correctness of the logical calculus, as formulated in the following result.
\begin{lem}\label{correct}\cite[Lemma 3.5]{Ku4}
Let $\mathcal{D}$ be a model for $[M;\mathcal{L}]$.
We make use of Condition (3) and note that 
$\Pi(\hat{M};\hat{\mathcal{L}})$ denotes the set of all
formulas provable in $[\hat{M};\hat{\mathcal{L}}]$.
Then we obtain for every formula $F$ in 
$[\hat{M};\hat{\mathcal{L}}]$:
%$\begin{displaystyle}
$$
F \in \Pi(\hat{M};\hat{\mathcal{L}}) \Rightarrow F \mbox{~is~valid~in~}
\mathcal{D}\,.
$$
%\end{displaystyle}$
\end{lem}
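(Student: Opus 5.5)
The plan is to argue by induction on the length of a proof $[\Lambda]$ in $[\hat{M};\hat{\mathcal{L}}]$. I will show that every formula occurring in $[\Lambda]$ is valid in $\mathcal{D}$ in the sense of Condition (7). Since $\mathcal{D}$ is a model, every axiom in $B$ is valid by Condition (8). It then suffices to check two things. First, every formula introduced by one of the axiom schemes belonging to rules (a)--(d) of \cite[(3.13)]{Ku} is valid. Second, rules (b), (c), (d) preserve validity. Throughout, a formula $F$ with $\mbox{free}(F)=\{x_1,\ldots,x_m\}$ is handled by fixing arbitrary $\lambda_1,\ldots,\lambda_m\in\hat{\mathcal{L}}_*$ and evaluating the closed instance $F\frac{\lambda_1}{x_1}\ldots\frac{\lambda_m}{x_m}$ with the clauses 6.1--6.4.

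For the propositional axioms in rule (a), validity follows from the truth-table clauses 6.3, because closed instances are evaluated compositionally. For modus ponens (rule (b)), take a common closed instance of $\to FG$ and $F$. Clause 6.3 then gives $\mathcal{D}(G)=\top$ for that instance. Free variables occurring in $G$ but not in $F$ are substituted by arbitrary closed lists, which are also admissible for $F$.

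For the quantifier rules (d), such as passing from $\to FG$ with $x\notin\mbox{free}(F)$ to $\to F\,\forall x G$, I unwind clause 6.4. The key identity is that substituting closed lists commutes with the quantifier when the bound variable is distinct from the substituted ones: $(\forall x G)\frac{\lambda}{y}=\forall x (G\frac{\lambda}{y})$ for $y\neq x$. This is a syntactic fact about the substitution operation from \cite[Section 3]{Ku}, which I will invoke.

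The equality axioms are checked with clause 6.1. Reflexivity and symmetry are immediate. The substitutivity axioms $\to\sim x,y\,\to F\,F\frac{y}{x}$ need Lemma \ref{structure_lemma}(b): if $\mathcal{D}(\lambda)=\mathcal{D}(\mu)$, then both $\mathcal{D}(H\frac{\lambda}{x})$ and $\mathcal{D}(H\frac{\mu}{x})$ equal $\mathcal{D}(H\frac{\alpha_d}{x})$ with $d=\mathcal{D}(\lambda)$.

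The main obstacle is the substitution rule (c) and the quantifier axioms of the form $\to\forall x F\,F\frac{\lambda}{x}$, where $\lambda\in\hat{\mathcal{L}}$ may contain variables. Here I must show
\begin{equation*}
\mathcal{D}\Big(F\frac{\lambda}{x}\frac{\mu_1}{y_1}\ldots\frac{\mu_k}{y_k}\Big)
=\mathcal{D}\Big(F\frac{\lambda\frac{\mu_1}{y_1}\ldots\frac{\mu_k}{y_k}}{x}\frac{\mu_1}{y_1}\ldots\frac{\mu_k}{y_k}\Big)
\end{equation*}
and then that this equals an instance $F\frac{\nu}{x}\ldots$ with a closed list $\nu\in\hat{\mathcal{L}}_*$. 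I will first reduce, via Lemma \ref{structure_lemma}(b), to substitutions of names $\alpha_d$ only. Names are constants outside the original alphabet, so the iterated substitutions commute by the syntactic substitution lemmas of \cite{Ku}. Closedness of $\hat{\mathcal{L}}$ under substitution, together with Condition (4), guarantees that all intermediate lists lie in $\hat{\mathcal{L}}_*$. With this compositionality in hand, the axiom $\to\forall x F\,F\frac{\lambda}{x}$ follows directly from clause 6.4. Rule (c) preserves validity because each closed instance of $F\frac{\lambda}{x}$ is, up to $\mathcal{D}$-value, a closed instance of $F$.
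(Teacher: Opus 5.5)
Your proposal is correct and matches the approach the paper indicates for the cited proof of \cite[Lemma 3.5]{Ku4}: induction over proofs in $[\hat{M};\hat{\mathcal{L}}]$, checking the logical axioms and rules (b)--(d) against clauses 6.1--6.4, with Lemma \ref{structure_lemma}(b) handling the equality axioms and the substitution step. The detour through names in rule (c) is not actually needed: for a closed substitution $\sigma$, the instance $(F\frac{\lambda}{x})\sigma$ is already a closed instance of $F$, with $x$ replaced by $\lambda\sigma$ and the other variables as in $\sigma$, and $\lambda\sigma\in\hat{\mathcal{L}}_*$ by closure property (ii) of $\hat{\mathcal{L}}$.
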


Let $\mathcal{D}$ be a structure of a formal mathematical system 
$[M;\mathcal{L}]$ with $M=[A;P;B]$. Then we can just drop the basis axioms in $B$:\\

\begin{defin}\label{formal_language}
If $[M;\mathcal{L}]$ is a formal mathematical system without basis axioms, i.e. we have $M=[A;P;\emptyset]$, then we will say that $[M;\mathcal{L}]$ is a \textit{formal mathematical language} or a \textit{formal language} for short.
In this case we will also write $M=[A;P]$ instead of
$M=[A;P;\emptyset]$.
\end{defin}

\begin{rem}\label{rem_language}
Every formal language $[M;\mathcal{L}]$ is also a formal system
with logical axioms given in \cite[(3.9),(3.10),(3.11)]{Ku}, namely the axioms of propositional calculus, the axioms of equality and the quantifier axioms.
Now let $\mathcal{D}$ be a structure for the formal language 
$[M;\mathcal{L}]$. Then $\mathcal{D}$ is also a model for 
$[M;\mathcal{L}]$, and we obtain from Lemma \ref{correct}
that every \textit{provable formula} in $[M;\mathcal{L}]$
is valid in $\mathcal{D}$. Therefore, it is very convenient to identify the original concept of a formal language with 
the specific type of formal systems 
from Definition \ref{formal_language}.
\end{rem}

\begin{exa}\label{shoenfield_structure}
Here we consider a special kind of structures for a formal mathematical language $[M;\mathcal{L}]$ with certain restrictions, which are mainly considered in model theory up to now:

To each constant or function symbol $a \in A$ we assign 
a fixed arity $n \in \setN_0$. 
For $n=0$ we say that $a$ is a constant symbol, and for $n \geq 1$ we say that $a$ is an $n$-ary function symbol.
Then $\mathcal{L}$ consists only on terms which are generated by the following rules.
\begin{itemize}
\item[1.] We have $x \in \mathcal{L}$ 
for all variables $x \in X=\{\,{\bf x_1}\,,\,{\bf x_2}\,,\,{\bf x_3}\,,\, \ldots \,\}$.
\item[2.] We have $a \in \mathcal{L}$ for all constant symbols $a \in A$.
\item[3.] Let $n>0$ and let $f$ be an $n$-ary function symbol in $A$.\\
Then $f(\lambda_1 \ldots \lambda_n) \in \mathcal{L}$ for all terms
$\lambda_1,\ldots,\lambda_n \in \mathcal{L}$.
\end{itemize}
To obtain a structure $\mathcal{D}$ 
for $[M;\mathcal{L}]$ we are following the textbook 
of Shoenfield \cite[Chapter 2.5]{Shoenfield}.

Given is a nonempty set $\mathcal{D}_*$. Its members are called individuals.
By induction with respect to the terms we shall now define an individual
$\mathcal{D}(t)$ for each variable-free term $t$ as follows:
\begin{itemize}
\item[i)]  Each individual $d \in \mathcal{D}_*$ has its own name $\alpha_d$.
Let $\mathcal{N}$ be the set of all these names.
We suppose that the symbols in $[M;\mathcal{L}]$ are different
from the symbols in $\mathcal{N}$ and that 
$d_1=d_2 \Leftrightarrow \alpha_{d_1}=\alpha_{d_2}$
for all $d_1,d_2 \in \mathcal{D}_*$.
Define $[\hat{M};\hat{\mathcal{L}}]$ and $\hat{\mathcal{L}}_*$
as in Condition (3).
\item[ii)] To each constant symbol $a \in \hat{\mathcal{L}}_*$ we assign a value $\mathcal{D}(a) \in \mathcal{D}_*$ 
with $d=\mathcal{D}(\alpha_{d})$ for all $d \in \mathcal{D}_*$.
To each $n$-ary function symbol $f$ we assign an $n$-ary function
$f_{\mathcal{D}} : \mathcal{D}_*^n \to \mathcal{D}_*$.
Let $t_1,\ldots,t_n \in \hat{\mathcal{L}}_*$ be variable-free terms and let
$\mathcal{D}(t_1),\ldots,\mathcal{D}(t_n) \in \mathcal{D}_*$
be defined previously. Then we put
$\begin{displaystyle}
\mathcal{D}(f(t_1\ldots t_n))=
f_{\mathcal{D}}(\mathcal{D}(t_1),\ldots,\mathcal{D}(t_n))
\end{displaystyle}$\,.

\end{itemize}
The remai\-ning steps of the construction are due to Conditions (5)-(7).
Then $\mathcal{D}$ is a structure for $[M;\mathcal{L}]$, and
Shoenfield's first lemma in \cite[Chapter 2.5]{Shoenfield}
is just a variant of our Lemma \ref{structure_lemma}.
\end{exa}

The kind of structures and models we study is more general,
because we don't just model functions and relations.
This is supported by the general syntax we use in our formal systems,
as the following example shows:

\begin{exa}\label{string_example}
We define $A=\{a_1,\ldots,a_n\}$ with $n \geq 1$ 
distinct\footnote{We require $E \cap A = \emptyset$ 
with the alphabet $E$ given in \cite[(3.1)(d)]{Ku}} symbols 
$a_1,\ldots,a_n$ and the set $\mathcal{L}$ of strings 
generated by the following rules:
\begin{itemize}
\item[1.] We have $X \subseteq \mathcal{L}$ with $X \cap A = \emptyset$\,, $X$ the set of variables.
\item[2.] If $\lambda \in \mathcal{L}$ 
and $\mu \in \mathcal{L}$, then
$\lambda \mu \in \mathcal{L}$\,.
\item[3.] We have $a_1,\ldots,a_n \in \mathcal{L}$\,.
\end{itemize}
We obtain a formal language 
$[M;\mathcal{L}]$ with $M = [A;P]$ and a given set $P$ of predicate symbols. Let $\mathcal{L}_*$ be the set of all strings 
$\kappa \in \mathcal{L}$ without variables,
i.e. we have $\mbox{var}(\kappa)=\emptyset$. We put
$\mathcal{D}_*=\mathcal{L}_*$ and define the set of names
$
\mathcal{N} = \{\alpha_{\kappa}\,:\,\kappa \in \mathcal{D}_*\}\,.
$
We may assume without loss of generality that these names are not part of the formal language $[M;\mathcal{L}]$. 
Let $[\hat{M};\hat{\mathcal{L}}]$ result from $[M;\mathcal{L}]$
by adding the names in $\mathcal{N}$ to $[M;\mathcal{L}]$.
Let $\hat{\mathcal{L}}_*$ be the set of all lists 
$\lambda \in \hat{\mathcal{L}}$ without variables.
We prescribe $\lambda \in \hat{\mathcal{L}}_*$. 
Let $\alpha_{\kappa_1},\ldots,\alpha_{\kappa_m}$ with $m \geq 0$ 
and $\kappa_1,\ldots,\kappa_m \in \mathcal{L}_*$
be the complete list of distinct names occurring in $\lambda$,
ordered according to their first occurrence in $\lambda$.
Let $x_1,\ldots x_m$ be distinct variables and let $\tilde{\lambda}$ result from $\lambda$
if we replace $\alpha_{\kappa_1},...,\alpha_{\kappa_m}$ everywhere in $\lambda$ 
by $x_1,...,x_m$, respectively. Since the names in $\mathcal{N}$
are not involved in $[M;\mathcal{L}]$, 
we have $\tilde{\lambda} \in \mathcal{L}$ and can define
$\begin{displaystyle}
\mathcal{D}(\lambda)=
\tilde{\lambda}\frac{\kappa_1}{x_1}\ldots\frac{\kappa_m}{x_m} \in \mathcal{L}_* \end{displaystyle}$ for
$\begin{displaystyle} \lambda=
\tilde{\lambda}\frac{\alpha_{\kappa_1}}{x_1}\ldots
\frac{\alpha_{\kappa_m}}{x_m} \in \hat{\mathcal{L}}_*
\end{displaystyle}$\,.\\
The resulting function $\mathcal{D} : 
\hat{\mathcal{L}}_* \to \mathcal{D}_*$ replaces every name 
$\alpha_{\kappa}$ occurring in $\lambda$ by its original string
$\kappa \in \mathcal{L}_*$ and hence satisfies Condition (4).

Now we can directly follow Conditions (5),(6) and (7) in order to obtain a special structure for $[M;\mathcal{L}]$ with prescribed relations between strings over the 
alphabet $A$. But there results a more general formal language if we omit the third rule for the generation of $\mathcal{L}$,
and structures with strings behave \textit{very differently} with respect to these two languages.
We can also  study the recursively enumerable relations 
in \cite[(1.12)]{Ku}, which is of interest 
because \cite[Theorem 3.5,Theorem 3.6]{Ku2} give us natural models of formal systems including structural induction. 
Then we must also 
take into account equivalence classes of strings.
\end{exa} 

Now we return to the general situation.
In the sequel we consider a formal language 
$[M;\mathcal{L}]$ with $M=[A;P]$.
Let $\mathcal{D}_j$ be a structure for $[M;\mathcal{L}]$ 
with universe $\mathcal{D}_{j,*}$ and the set of names 
$\mathcal{N}_j$ for $j=1,2,3$, respectively. We put
\begin{align*}
\mathcal{N}_1=\{\alpha_{d} : d \in \mathcal{D}_{1,*}\}\,,
\mathcal{N}_2=\{\beta_{d'} : d' \in \mathcal{D}_{2,*}\}\,,
\mathcal{N}_3=\{\gamma_{d''} : d'' \in \mathcal{D}_{3,*}\}\,.
\end{align*}

For each $j \in \{1,2,3\}$ we have to require 
that the names in $\mathcal{N}_j$  are new symbols,
not already present in $[M;\mathcal{L}]$, and that the 
restricted mappings 
$\mathcal{D}_j|_{\mathcal{N}_j} : \mathcal{N}_j \to
\mathcal{D}_{j,*}$ with $\mathcal{D}_1|_{\mathcal{N}_1}(\alpha_{d})=d$, 
$\mathcal{D}_2|_{\mathcal{N}_2}(\beta_{d'})=d'$ and
$\mathcal{D}_3|_{\mathcal{N}_3}(\gamma_{d''})=d''$ 
are bijective, respectively, see Lemma \ref{structure_lemma}(a). 
Apart from these restrictions we can freely choose the names separately for each structure\footnote{According to Shoenfield we use $\alpha_d$, $\beta_{d'}$, $\gamma_{d''}$ like syntactical variables for names}. 
Due to Condition (3) we have to add the names in $\mathcal{N}_j$ of each structure $\mathcal{D}_j$ to $[M;\mathcal{L}]$.
For $\mathcal{D}_j$ with $j=1,2,3$ we form 
$\hat{A_j}=A \cup \mathcal{N}_j$, 
$\hat{M_j}=[\hat{A_j};P]$ and
\begin{align*} \hat{{\mathcal L}_j}   = 
\{\, & \lambda \frac{\kappa_1}{x_1}...\frac{\kappa_m}{x_m} \,\,:\,\, 
\lambda \in {\mathcal L},\,
x_1,...,x_m \in X,\,\\ & \kappa_1,...,\kappa_m \in 
\mathcal{N}_j\,,\, m \geq 0\}\,.
\end{align*}
Since we have not added basis axioms, we obtain an extended mathematical language
$[\hat{M_j};\hat{\mathcal{L}_j}]$ 
for $j=1,2,3$, respectively. Let $\hat{\mathcal{L}}_{j,*}$ be the set of all lists $\lambda \in \hat{\mathcal{L}_j}$ without variables, i.e. we have $\mbox{var}(\lambda)= \emptyset$.

In order to let each structure $\mathcal{D}_j$ of $[M;\mathcal{L}]$ assign a truth value to variable-free prime formulas $p \lambda_1, \cdots, \lambda_n$ of $[\hat{M_j};\hat{\mathcal{L}_j}]$ with 
$p \in P$ we have to prescribe predicates $p_{n,j} \subseteq \mathcal{D}_{j,*}^n$ for each $n \in \setN_0$ with $p_{0,j} \in \{\top ,\bot\}$ 
and $j=1,2,3$, respectively.
We want to find suitable conditions in order to define
an homomorphism or isomorphism 
between the structures $\mathcal{D}_1$, $\mathcal{D}_2$ 
for $[M;\mathcal{L}]$.
First we have a mapping 
$\psi : \mathcal{D}_{1,*} \to \mathcal{D}_{2,*}$.
Next we prescribe $\lambda \in \hat{\mathcal{L}}_{1,*}$ and consider for $m \geq 0$ the complete list of \textit{distinct names} 
$\alpha_{d_1},...,\alpha_{d_m}$ in $\lambda$,
ordered according to their first occurrence in $\lambda$. 
Let $x_1,...,x_m \in X$ be \textit{distinct variables} 
and let $\tilde{\lambda}$ result from $\lambda$
if we replace $\alpha_{d_1},...,\alpha_{d_m}$ everywhere in $\lambda$ 
by $x_1,...,x_m$, respectively. 
Since the names in $\mathcal{N}_1$ are different 
from the symbols in $[M;\mathcal{L}]$, we conclude that 
$\tilde{\lambda} \in \mathcal{L}$ with 
$\mbox{var}(\tilde{\lambda})=\{x_1,...,x_m\}$. Then we put
\begin{align}\label{psidef}
\psi_*(\lambda)=\tilde{\lambda}\,\frac{\beta_{\psi(d_1)}}{x_1}...
\frac{\beta_{\psi(d_m)}}{x_m} \in \hat{\mathcal{L}}_{2,*} \mbox{~~for~~}
\lambda=\tilde{\lambda}\,\frac{\alpha_{d_1}}{x_1}...
\frac{\alpha_{d_m}}{x_m} \in \hat{\mathcal{L}}_{1,*}\,.
\end{align}
In this way we obtain a well defined function 
$\psi_* : \hat{\mathcal{L}}_{1,*} \to \hat{\mathcal{L}}_{2,*}$, 
where $\psi_*$ is uniquely determined from $\psi$. 
Recall that $\mathcal{D}_{1}$ and $\mathcal{D}_{2}$ are structures
of the same formal language $[M;\mathcal{L}]$\,.
\begin{rem}\label{rem_psidef}
We note that formulas \eqref{psidef} for the calculation
of $\psi_*(\lambda)$ also remain valid if we have any list 
of not necessarily distinct variables $x_1,...,x_m$ with 
$\mbox{var}(\tilde{\lambda}) \subseteq \{x_1,...,x_m\}$.
This is because among the variables that occur multiple times,
only the innermost variables in \eqref{psidef} 
occurring in $\tilde{\lambda}$ will contribute to the substitutions.
In \eqref{psidef} we can also relax the condition that the names 
$\alpha_{d_1},...,\alpha_{d_m}$ are distinct, 
because all variables $x_k$ with indices $k$ in the list $x_1,...,x_m$ which are substituted by the same name $\alpha_d=\alpha_{d_k}$, can be replaced with a single new variable in $\tilde{\lambda}$, without changing the final result.
But $\tilde{\lambda} \in \mathcal{L}$ is crucial for \eqref{psidef}.
\end{rem}
\begin{defin}\label{homisodef} 
(a) The mapping 
$\psi : \mathcal{D}_{1,*} \to \mathcal{D}_{2,*}$
is called a \textit{homomorphism} from $\mathcal{D}_{1}$ to 
$\mathcal{D}_{2}$
iff there hold the two conditions
\begin{align*}
\psi\left(\mathcal{D}_{1}(\lambda)\right)=
\mathcal{D}_{2}(\psi_*(\lambda))
\quad \mbox{for~all~} \lambda \in \hat{\mathcal{L}}_{1,*}\,,\\
\big( p_{0,1} \Rightarrow p_{0,2} \big)
\mbox{~and~} 
\big( (d_1,...,d_n) \in p_{n,1} \Rightarrow
 (\psi(d_1),...,\psi(d_n)) \in p_{n,2} \big)
\end{align*}
for all $p \in P$, for all $n \in \setN$ and for all 
$d_1,...,d_n \in \mathcal{D}_{1,*}$\,.\\
\noindent 
(b) If $\psi : \mathcal{D}_{1,*} \to \mathcal{D}_{2,*}$ 
is \textit{bijective} with 
\begin{align*}
\psi\left(\mathcal{D}_{1}(\lambda)\right)=
\mathcal{D}_{2}(\psi_*(\lambda))
\quad \mbox{for~all~} \lambda \in  \hat{\mathcal{L}}_{1,*} \,,\\
\big( p_{0,1} \Leftrightarrow p_{0,2} \big)
\mbox{~and~} 
\big( (d_1,...,d_n) \in p_{n,1} \Leftrightarrow
 (\psi(d_1),...,\psi(d_n)) \in p_{n,2} \big)
\end{align*}
for all $p \in P$, for all $n \in \setN$ and for all 
$d_1,...,d_n \in \mathcal{D}_{1,*}$,
then we say that $\psi$ is an \textit{isomorphism} from
$\mathcal{D}_{1}$ to $\mathcal{D}_{2}$. An isomorphism
$\psi$ from $\mathcal{D}_{1}$ to $\mathcal{D}_{1}$ 
is also called an \textit{automorphism} of $\mathcal{D}_{1}$. 
\end{defin}
\begin{rem}\label{equal_structures}
(a) The two structures $\mathcal{D}_{1}$, $\mathcal{D}_{2}$
in Definition \ref{homisodef} can only
be \textit{equal} if at first 
$\mathcal{D}_{1,*}=\mathcal{D}_{2,*}=\mathcal{D}_{*}$, 
at second $\hat{\mathcal{L}}_{1}=
\hat{\mathcal{L}}_{2}=
\hat{\mathcal{L}}$
with $\mathcal{D}_{2}(\lambda)=\mathcal{D}_{1}(\lambda)$
for all $\lambda \in \hat{\mathcal{L}}$ 
with $\mbox{var}(\lambda)=\emptyset$,
and at third $p_{n,1}=p_{n,2}$ for all $p \in P$, $n \in \setN_0$.
Then the \textit{identity map} 
$\psi = \mbox{id}_{\mathcal{D}_{*}}: 
\mathcal{D}_{*} \to \mathcal{D}_{*}$
is an automorphism of $\mathcal{D}_{1}$.

(b) Here we make use of the notations and assumptions
for the special formal language $[M;\mathcal{L}]$
from Example \ref{shoenfield_structure}.
We consider the homomorphism $\psi$ between the two structures 
$\mathcal{D}_{1}$, $\mathcal{D}_{2}$
for $[M;\mathcal{L}]$ according to Definition \ref{homisodef}(a).
Assume that $f$ is an $n$-ary function symbol, $n \geq 1$, and that
$t_1,\ldots,t_n  \in \hat{\mathcal{L}}_{1,*}$. 
Then $f(t_1 \ldots t_n)\in \hat{\mathcal{L}}_{1,*}$,
and we obtain from \eqref{psidef}
\begin{align}\label{psicompatible}
\psi_*(f(t_1 \ldots t_n))=f(\psi_*(t_1) \ldots \psi_*(t_n)) 
\in \hat{\mathcal{L}}_{2,*}\,.
\end{align}
Let $f_{\mathcal{D}_1} : \mathcal{D}_{1,*}^n \to \mathcal{D}_{1,*}$
and $f_{\mathcal{D}_2} : \mathcal{D}_{2,*}^n \to \mathcal{D}_{2,*}$
be the $n$-ary functions corres\-ponding to the function symbol $f$
in $\mathcal{D}_{1}$ and $\mathcal{D}_{2}$, respectively.
We make use of the functions $\psi_*$ and $\psi$,
and obtain from \eqref{psicompatible} and the first condition in
Definition \ref{homisodef}(a) for all 
$t_1,\ldots,t_n  \in \hat{\mathcal{L}}_{1,*}$:
\begin{align*}
& (\psi \circ f_{\mathcal{D}_1})
(\mathcal{D}_1(t_1),\ldots,\mathcal{D}_1(t_n))\\
& = \psi ( \mathcal{D}_1( f(t_1 \ldots t_n)))\\
& = \mathcal{D}_2 (\psi_* ( f(t_1 \ldots t_n)))\\
& = \mathcal{D}_2 (f(\psi_*(t_1) \ldots \psi_*(t_n)))\\
& = f_{\mathcal{D}_2}(\mathcal{D}_2(\psi_*(t_1)),\ldots,
\mathcal{D}_2(\psi_*(t_n)))\\
& = f_{\mathcal{D}_2}(\psi(\mathcal{D}_1(t_1)),\ldots,
\psi(\mathcal{D}_1(t_n)))\,.
\end{align*}
Since $\mathcal{D}_1 : \hat{\mathcal{L}}_{1,*} 
\to \mathcal{D}_{1,*}$ is surjective, we have for all
$\vartheta_1,\ldots,\vartheta_n \in \mathcal{D}_{1,*}$:
\begin{align*}
 (\psi \circ f_{\mathcal{D}_1})(\vartheta_1,\ldots,\vartheta_n)=
 f_{\mathcal{D}_2}(\psi(\vartheta_1),\ldots,\psi(\vartheta_n))\,.
\end{align*}
This is a well known relation for the definition of an
homomorphism $\psi$ between the $n$-ary functions $f_{\mathcal{D}_1}$ 
and $f_{\mathcal{D}_2}$, see for example Hodges \cite[Chapter 1.2]{H},
and Definition \ref{homisodef} is indeed a generalization
of the corresponding definitions for the modeling of functions
and relations.
\end{rem}

\begin{exa}\label{klein}
The Cayley table for Klein's abelian four-group $V_4$ is given by
\begin{equation*}
		\begin{tabular}{c|cccc}
		~~ $\cdot$ ~~  & ~~ $e$ ~~  & ~~ $a$ ~~ & ~~ $b$ ~~  & ~~ $c$ ~~\\ \hline
		$e$  & \rule{0pt}{2.3ex}   $e$  &  $a$   &  $b$  & $c$ \\ 
		$a$  & \rule{0pt}{2.3ex}   $a$  &  $e$   &  $c$   & $b$ \\ 
		$b$  & \rule{0pt}{2.3ex}   $b$  &  $c$   &  $e$   & $a$ \\ 
		$c$  & \rule{0pt}{2.3ex}   $c$  &  $b$   &  $a$   & $e$ \\ 
		\end{tabular}
	\end{equation*}
	The group $V_4$ has the four distinct members $e,a,b,c$, 
	where $e$ is the identity element.
	
	We define the formal language $[M;\mathcal{L}]$  
	with $M = [A;P]$, $A=\{*\}$, $P = \emptyset$ and the set $\mathcal{L}$
	of terms generated by the following two rules:
	\begin{itemize}
	\item We have $x \in \mathcal{L}$ for any variable $x \in X$, 
	\item if $\lambda, \mu \in \mathcal{L}$ , then $*(\lambda \mu) \in \mathcal{L}$\,.
	\end{itemize}
	
	The symbols $e,a,b,c$ are not present in the formal language 
	$[M;\mathcal{L}]$, so we can use the set of names
	$\mathcal{N}=\{e,a,b,c\}$. 
	
	We obtain the formal language
	$[\hat{M};\hat{\mathcal{L}}]$ by adding the names in $\mathcal{N}$
	to $[M;\mathcal{L}]$. Let $\hat{\mathcal{L}}_*$ be 
	the set of all terms $\lambda \in \hat{\mathcal{L}}$ without variables.
	%, i.e. $\mbox{var}(\lambda)=\emptyset$. 
	We define $\mathcal{D}_*=\mathcal{N}$ and
	$\mathcal{D} : \hat{\mathcal{L}}_* \to \mathcal{D}_*$ 
	recursively as follows:
\begin{align*}
	\mathcal{D}(e)=e\,,\mathcal{D}(a)=a\,,\mathcal{D}(b)=b\,,
	\mathcal{D}(c)=c\\
	\mbox{and}~~\mathcal{D}(*(\lambda \mu))=\mathcal{D}(\lambda) \cdot 
	\mathcal{D}(\mu)
	~~\mbox{if}~~ \lambda, \mu \in \hat{\mathcal{L}}_*\,. 
\end{align*}
We do not have predicate symbols in $P$, so 
Condition (5) is vacuous, 
but we require for all $\lambda, \mu \in \hat{\mathcal{L}}_*$ that
$$\mathcal{D}(\sim \lambda,\mu)=\top \Leftrightarrow
\mathcal{D}(\lambda)=\mathcal{D}(\mu)\,.$$
Following Conditions (6),(7) we obtain a 
structure $\mathcal{D}$ of Klein's four-group
for the formal language $[M;\mathcal{L}]$.
Its universe is $\mathcal{D}_*=\{e,a,b,c\}$.
Let $\pi : \{a,b,c\} \to \{a,b,c\}$ be a bijective mapping, i.e. any permutation
of the three members $a,b,c$. Then 
\begin{equation*}
	\psi(g) =\begin{cases}
	e &\text{if}~ g=e,\\
	\pi(g) &\text{if}~g \in \{a,b,c\}\\
	\end{cases}
	\end{equation*}
defines an automorphism $\psi : \mathcal{D}_* \to \mathcal{D}_*$
of $\mathcal{D}$. 

The mapping $\psi_* : \hat{\mathcal{L}}_* \to \hat{\mathcal{L}}_*$
in \eqref{psidef} can also be calculated by 
\begin{align*}
	\psi_*(e)=e\,,\psi_*(a)=\pi(a)\,,\psi_*(b)=\pi(b)\,,\psi_*(c)=\pi(c)\\
	\mbox{and}~~\psi_*(*(\lambda \mu))=*(\psi_*(\lambda)\psi_*(\mu))
	~~\mbox{if}~~ \lambda, \mu \in \hat{\mathcal{L}}_*\,. 
\end{align*}
Note that $\psi_*$ is also a bijective mapping with
$\psi \circ \mathcal{D}=\mathcal{D} \circ \psi_*$.
%We see that $\psi_*$ reflects the properties of the automorphism $\psi$,
%but on the level of the formal language $[\hat{M},\hat{\mathcal{L}}]$
%rather than on the metamathe\-matical level. 

Finally we yet want to mention that the simple formal language $[M;\mathcal{L}]$ in our example admits many other structures beside groups and semigroups. In order to model only groups, we have to add basis
axioms for groups to the formal language $[M;\mathcal{L}]$, for example
the two axioms
\begin{align*}
G_1=\, \sim \, *(*(xy)z),*(x*(yz))\,,\\
G_2=\, \exists x \, \forall y ~\&~ \sim\,*(xy),y~\exists z 
\,\sim\,*(zy),x\,,
\end{align*}
where $x$, $y$, $z$ are three distinct given variables.
Here we may likewise replace the formula $G_1$ by 
the formula $\forall x \forall y \forall z\, 
\sim \, *(*(xy)z),*(x*(yz))$.
There results a formal mathematical system $[M_{Grp};\mathcal{L}]$
with $M_{Grp}=[\,\{*\};\emptyset;\{G_1;G_2\}\,]$, whose models are only groups.
\end{exa}

\begin{thm}\label{homthm1}
(a) If $\psi : \mathcal{D}_{1,*} \to \mathcal{D}_{2,*}$
is a homomorphism from $\mathcal{D}_{1}$ to $\mathcal{D}_{2}$
and $\varphi : \mathcal{D}_{2,*} \to \mathcal{D}_{3,*}$
a homomorphism from $\mathcal{D}_{2}$ to $\mathcal{D}_{3}$,
then the mapping 
$\varphi \circ \psi : \mathcal{D}_{1,*} \to \mathcal{D}_{3,*}$
is a homomorphism from $\mathcal{D}_{1}$ to $\mathcal{D}_{3}$.\\
(b) Let $\psi : \mathcal{D}_{1,*} \to \mathcal{D}_{2,*}$
be an isomorphism from $\mathcal{D}_{1}$ to $\mathcal{D}_{2}$
and let \mbox{$\varphi : \mathcal{D}_{2,*} \to \mathcal{D}_{3,*}$}
be an isomorphism from $\mathcal{D}_{2}$ to $\mathcal{D}_{3}$.
Then the composed mapping $\varphi \circ \psi : \mathcal{D}_{1,*} \to \mathcal{D}_{3,*}$
is an isomorphism from $\mathcal{D}_{1}$ to $\mathcal{D}_{3}$.\\
(c) If $\psi$ is an isomorphism from
$\mathcal{D}_{1}$ to $\mathcal{D}_{2}$, then 
$\psi^{-1} : \mathcal{D}_{2,*} \to \mathcal{D}_{1,*}$
is an isomorphism from $\mathcal{D}_{2}$ to $\mathcal{D}_{1}$.
\end{thm}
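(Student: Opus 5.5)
The central point is a functoriality property of the lifted map from \eqref{psidef}: for homomorphisms $\psi$ and $\varphi$ as in the statement,
\begin{align*}
(\varphi\circ\psi)_* = \varphi_*\circ\psi_* \quad\mbox{on}~\hat{\mathcal{L}}_{1,*}\,.
\end{align*}
Once this is established, the three parts follow by direct verification of the conditions in Definition \ref{homisodef}.

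For the identity, fix $\lambda\in\hat{\mathcal{L}}_{1,*}$ and write $\lambda=\tilde{\lambda}\frac{\alpha_{d_1}}{x_1}\ldots\frac{\alpha_{d_m}}{x_m}$ with distinct names $\alpha_{d_1},\ldots,\alpha_{d_m}$ and distinct variables $x_1,\ldots,x_m$, so that $\tilde{\lambda}\in\mathcal{L}$. Then $\psi_*(\lambda)=\tilde{\lambda}\frac{\beta_{\psi(d_1)}}{x_1}\ldots\frac{\beta_{\psi(d_m)}}{x_m}$. The names $\beta_{\psi(d_k)}$ need not be distinct, since $\psi$ need not be injective. Remark \ref{rem_psidef} says exactly that \eqref{psidef} may still be applied with the same $\tilde{\lambda}\in\mathcal{L}$ and a possibly repeated list of names. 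Applying $\varphi_*$ therefore gives $\tilde{\lambda}\frac{\gamma_{\varphi(\psi(d_1))}}{x_1}\ldots\frac{\gamma_{\varphi(\psi(d_m))}}{x_m}$, which is $(\varphi\circ\psi)_*(\lambda)$. This is the only delicate step: one must check that substituting names for names inside the iterated substitution leaves the underlying skeleton $\tilde{\lambda}$ unchanged. The names are fresh symbols that do not occur in $\tilde{\lambda}$, and substitution of variable-free lists is performed simultaneously in effect, so Remark \ref{rem_psidef} covers it.

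\emph{Part (a).} Using the identity,
\begin{align*}
\varphi(\psi(\mathcal{D}_1(\lambda)))=\varphi(\mathcal{D}_2(\psi_*(\lambda)))=\mathcal{D}_3(\varphi_*(\psi_*(\lambda)))=\mathcal{D}_3((\varphi\circ\psi)_*(\lambda))\,.
\end{align*}
The predicate conditions follow by chaining the two implications $p_{0,1}\Rightarrow p_{0,2}\Rightarrow p_{0,3}$, and likewise $(d_1,\ldots,d_n)\in p_{n,1}\Rightarrow(\psi(d_i))\in p_{n,2}\Rightarrow(\varphi(\psi(d_i)))\in p_{n,3}$.

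\emph{Part (b).} The argument is the same with equivalences in place of implications. In addition, $\varphi\circ\psi$ is bijective as a composition of bijections.

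\emph{Part (c).} Set $\chi=\psi^{-1}$, which is bijective. For the predicate conditions, given $d'_1,\ldots,d'_n\in\mathcal{D}_{2,*}$, put $d_i=\chi(d'_i)$ and apply the equivalence for $\psi$.

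For the first condition, note that $\chi_*\circ\psi_*=(\chi\circ\psi)_*=(\mathrm{id})_*=\mathrm{id}$ on $\hat{\mathcal{L}}_{1,*}$, and symmetrically $\psi_*\circ\chi_*=\mathrm{id}$ on $\hat{\mathcal{L}}_{2,*}$. The composition identity was derived only from \eqref{psidef} and Remark \ref{rem_psidef}, not from any homomorphism property, so it applies here. Hence $\psi_*$ is a bijection with inverse $\chi_*$.

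Now take $\mu\in\hat{\mathcal{L}}_{2,*}$ and write $\mu=\psi_*(\lambda)$ with $\lambda=\chi_*(\mu)$. Then
\begin{align*}
\chi(\mathcal{D}_2(\mu))=\chi(\mathcal{D}_2(\psi_*(\lambda)))=\chi(\psi(\mathcal{D}_1(\lambda)))=\mathcal{D}_1(\lambda)=\mathcal{D}_1(\chi_*(\mu))\,,
\end{align*}
which is the required condition for $\chi$, completing part (c).
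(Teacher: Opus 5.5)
Your proposal is correct and follows the paper's proof essentially step by step: first the identity $\varphi_*\circ\psi_*=(\varphi\circ\psi)_*$, obtained via Remark \ref{rem_psidef}, and then chaining the conditions of Definition \ref{homisodef}. Your part (c) is slightly more careful than the paper's. The paper only asserts that $\psi_*$ is bijective, whereas you derive $\psi_*^{-1}=(\psi^{-1})_*$ from the composition identity, which is exactly what the first isomorphism condition for $\psi^{-1}$ requires.
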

\begin{proof}
(a) We define the function 
$\varphi_* : \hat{\mathcal{L}}_{2,*} \to \hat{\mathcal{L}}_{3,*}$
as follows: We prescribe $\lambda' \in \hat{\mathcal{L}}_{2,*}$ and consider for $k \geq 0$ 
the complete list of distinct names $\beta_{d'_1},...,\beta_{d'_k}$ in $\lambda'$,
ordered according to their first occurrence in $\lambda'$. Let $x_1,...,x_k \in X$
be distinct variables and let $\tilde{\lambda} \in \mathcal{L}$ 
result from $\lambda'$
if we replace $\beta_{d'_1},...,\beta_{d'_k}$ everywhere in $\lambda'$ 
by $x_1,...,x_k$, respectively. Then we put
\begin{align}\label{phidef}
\varphi_*(\lambda')=\tilde{\lambda}\,\frac{\gamma_{\varphi(d'_1)}}{x_1}...
\frac{\gamma_{\varphi(d'_k)}}{x_k} \in \hat{\mathcal{L}}_{3,*}
\mbox{~~~for~~}
\lambda'=\tilde{\lambda}\,\frac{\beta_{d'_1}}{x_1}...
\frac{\beta_{d'_k}}{x_k} \in \hat{\mathcal{L}}_{2,*}\,.
\end{align}
We have seen in Remark \ref{rem_psidef} that formulas \eqref{psidef}
hold without further restrictions on variables $x_1,\ldots,x_m$
and names $\alpha_{d_1},\ldots,\alpha_{d_m} \in \mathcal{N}_1$.

Especially for any given $\lambda \in \hat{\mathcal{L}}_{1,*}$
we can find $\tilde{\lambda} \in \mathcal{L}$,
variables $x_1,...,x_m$ with 
$\mbox{var}(\tilde{\lambda}) \subseteq \{x_1,...,x_m\}$  
and $d_1,\cdots,d_m \in \mathcal{D}_{1,*}$
so that
\begin{align}\label{lambdastrich}
\lambda' = \psi_*(\lambda)
=\tilde{\lambda}\,\frac{\beta_{\psi(d_1)}}{x_1}...
\frac{\beta_{\psi(d_m)}}{x_m}  \in \hat{\mathcal{L}}_{2,*}
\mbox{~~~for~~}
\lambda=\tilde{\lambda}\,\frac{\alpha_{d_1}}{x_1}...
\frac{\alpha_{d_m}}{x_m}\,.
\end{align}
We can also follow the same reasoning from Remark \ref{rem_psidef} to calculate $\varphi_*(\lambda')$ directly from $\lambda'$
given in \eqref{lambdastrich}, instead of using \eqref{phidef}. 
We obtain
\begin{align*}
(\varphi_* \circ \psi_*)(\lambda)= \varphi_*(\lambda')=
\tilde{\lambda}\,\frac{\gamma_{\varphi(\psi(d_1))}}{x_1}...
\frac{\gamma_{\varphi(\psi(d_m)}}{x_m} \in \hat{\mathcal{L}}_{3,*}
\end{align*}
with $\tilde{\lambda} \in \mathcal{L}$ and 
$\begin{displaystyle}
\lambda=\tilde{\lambda}\,\frac{\alpha_{d_1}}{x_1}...
\frac{\alpha_{d_m}}{x_m} \in \hat{\mathcal{L}}_{1,*}\,.
\end{displaystyle}$

We see that $\varphi_* \circ \psi_* : 
\hat{\mathcal{L}}_{1,*} \to \hat{\mathcal{L}}_{3,*}$
is uniquely constructed from $\varphi \circ \psi$
in the same way as $\psi_*$ was obtained from $\psi$ and
$\varphi_*$ from $\varphi$, respectively,
i.e. $\varphi_* \circ \psi_* =(\varphi \circ \psi)_*$\,.
We have
\begin{align*}
\psi\left(\mathcal{D}_{1}(\lambda)\right)=
\mathcal{D}_{2}(\psi_*(\lambda))
\qquad \mbox{for~all~} \lambda \in \hat{\mathcal{L}}_{1,*} \,,
\end{align*}
as well as
\begin{align*}
\varphi\left(\mathcal{D}_{2}(\lambda')\right)=
\mathcal{D}_{3}(\varphi_*(\lambda'))
\qquad \mbox{for~all~} \lambda' \in \hat{\mathcal{L}}_{2,*} \,,
\end{align*}
and obtain with $\lambda' = \psi_*(\lambda) \in \hat{\mathcal{L}}_{2,*}$ that
\begin{align*}
(\varphi \circ \psi)(\mathcal{D}_1(\lambda))=
\varphi(\psi(\mathcal{D}_1(\lambda)))=
\varphi(\mathcal{D}_2(\psi_*(\lambda)))\\=
\mathcal{D}_3(\varphi_*(\psi_*(\lambda)))=
\mathcal{D}_3((\varphi_* \circ \psi_*)(\lambda))\,.
\end{align*}
We see that the first condition in Definition \ref{homisodef} 
required for $\varphi \circ \psi$ to be an homomorphism 
from $\mathcal{D}_1$
to $\mathcal{D}_3$ is satisfied.

We have $p_{0,1} \Rightarrow p_{0,2} \Rightarrow p_{0,3}$ and
\begin{align*}
(d_1,...,d_n) \in p_{n,1} \Rightarrow 
(\psi(d_1),...,\psi(d_n)) \in p_{n,2}\,, \\
(\psi(d_1),...,\psi(d_n)) \in p_{n,2} \Rightarrow 
((\varphi \circ \psi)(d_1),...,(\varphi \circ \psi)(d_n)) \in p_{n,3}
\end{align*}
for all $d_1,...,d_n \in \mathcal{D}_{1,*}$\,,
and the second condition in Definition \ref{homisodef} required for 
$\varphi \circ \psi$ to be an homomorphism from $\mathcal{D}_1$
to $\mathcal{D}_3$  is satisfied as well.\\
(b) This part is proven in the same way as part (a), by using 
the fact that the composition of two bijective functions is
again a bijective function.\\
(c) Now we assume that $\psi$ is an \textit{isomorphism}. Then we obtain 
from \eqref{psidef} that $\psi_*$ is bijective, hence
\begin{align*}
\mathcal{D}_{1}(\lambda))=
\psi^{-1}(\mathcal{D}_{2}(\psi_*(\lambda))
\qquad \mbox{for~all~} \lambda \in \mathcal{L}_{1,*} \,,\\
\mathcal{D}_{1}(\psi_*^{-1}(\lambda'))=
\psi^{-1}(\mathcal{D}_{2}(\lambda'))
\qquad \mbox{for~all~} \lambda' \in \mathcal{L}_{2,*} \,.
\end{align*}
Finally we have
\begin{align*}
p_{0,2} \Leftrightarrow p_{0,1} \mbox{~and~} (d'_1,...,d'_n) \in p_{n,2} \Leftrightarrow
 (\psi^{-1}(d'_1),...,\psi^{-1}(d'_n)) \in p_{n,1} 
\end{align*}
for all $p \in P$, for all $n \in \setN$ and for all 
$d'_1,...,d'_n \in \mathcal{D}_{2,*}$\,.
\end{proof}

\begin{defin}\label{henkin_model}
Let $[M;\mathcal{L}]$ with $M=[A;P]$ be a formal language
and let $\mathcal{D}$ be a structure for $[M;\mathcal{L}]$.
We define the formal mathematical system 
$[M(\mathcal{D});\mathcal{L}(D)]$ as follows:
We will first define the extension $[\hat{M};\hat{\mathcal{L}}]$
with $\hat{M}=[\hat{A};P]$ due to Condition (3) by adding the names of $\mathcal{D}$ to $[M;\mathcal{L}]$. Let $\hat{B}$
be the set of all formulas of $[\hat{M};\hat{\mathcal{L}}]$
which are valid in $\mathcal{D}$. Then we put
$\begin{displaystyle}
M(\mathcal{D})=[\hat{A};P;\hat{B}]\,, ~ 
\mathcal{L}(D)=\hat{\mathcal{L}}\,.
\end{displaystyle}$
We can also use the notation 
$[M(\mathcal{D});\mathcal{L}(D)]$ if $\mathcal{D}$
is a model of a formal mathematical system $[M;\mathcal{L}]$ 
with $M = [A;P;B]$, because we have $B \subseteq \hat{B}$.
\end{defin}

The formal mathematical language $[\hat{M};\hat{\mathcal{L}}]$ 
from Definition \ref{henkin_model}
has a counterpart in 
\cite[Chapter 2.5]{Shoenfield}, 
whereas the formal mathe\-ma\-tical system
$[M(\mathcal{D});\mathcal{L}(D)]$ 
has a counterpart in Shoenfields book
from \cite[Chapters 2.5, 5.5]{Shoenfield}. 
The reason for our definition is the following 

\begin{thm}\label{complete_henkin_model}
The formal mathematical system 
$[M(\mathcal{D});\mathcal{L}(D)]$ 
from Definition \ref{henkin_model}
is a complete Henkin system due to Definition \ref{henkin_def}, and hence it is also a consistent formal mathematical system.
\end{thm}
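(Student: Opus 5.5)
The key observation is that $\mathcal{D}$ is a model of $[M(\mathcal{D});\mathcal{L}(D)]$ in a suitable sense. Adding the names of $\mathcal{D}$ to $\hat{A}$ again is unnecessary, because the names $\alpha_d$ are already present and $\hat{\mathcal{L}}$ is closed under substituting names for variables. The plan is to show that $\Pi(M(\mathcal{D});\mathcal{L}(D))$ coincides with $\hat{B}$, the set of formulas valid in $\mathcal{D}$. The inclusion $\hat{B}\subseteq\Pi$ is trivial, since basis axioms are provable. For the converse I will run the correctness argument of Lemma \ref{correct} directly for $\mathcal{D}$ viewed as a structure for the language $[\hat{M};\hat{\mathcal{L}}]$ with basis axioms $\hat{B}$. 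Every logical axiom of \cite[(3.9)--(3.11)]{Ku} and every basis axiom in $\hat{B}$ is valid in $\mathcal{D}$. Rules (a)--(d) preserve validity, by the same induction on proofs as in \cite[Lemma 3.5]{Ku4}, using Lemma \ref{structure_lemma}(b) for the substitution rule (c) and the quantifier rules. Hence $\Pi(M(\mathcal{D});\mathcal{L}(D))=\hat{B}$.

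Completeness then follows immediately. Take a closed formula $F$ of $[M(\mathcal{D});\mathcal{L}(D)]$. Since it is closed, $F$ is valid in $\mathcal{D}$ iff $\mathcal{D}(F)=\top$. By 6.3, $\neg F$ is valid iff $\mathcal{D}(F)=\bot$. Therefore $F\notin\Pi \Leftrightarrow \mathcal{D}(F)=\bot\Leftrightarrow \neg F\in\Pi$. Consistency also follows: for a closed formula such as $\forall x\,\sim x,x$, its negation is not valid, so not every formula is provable.

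For the Henkin property, let $F$ be a formula with $\mathrm{free}(F)\subseteq\{x\}$. There are two cases.
\begin{itemize}
\item[(i)] If $\mathcal{D}(\exists x F)=\top$, then by 6.4 there is $\lambda\in\hat{\mathcal{L}}_*$ with $\mathcal{D}(F\frac{\lambda}{x})=\top$. By Lemma \ref{structure_lemma}(b) we get $\mathcal{D}(F\frac{c}{x})=\top$ for the constant $c=\alpha_{\mathcal{D}(\lambda)}\in\hat{\mathcal{L}}$.
\item[(ii)] Otherwise pick any name $c=\alpha_d$; this is possible because $\mathcal{D}_*\neq\emptyset$.
\end{itemize}
In both cases $\mathcal{D}(\to \exists x F\, F\frac{c}{x})=\top$ by 6.3. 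This closed formula is therefore in $\hat{B}\subseteq\Pi$, as required. One should check that $\alpha_d$ counts as a constant in $\hat{\mathcal{L}}$, i.e.\ a variable-free list; this holds because $\alpha_d = x\frac{\alpha_d}{x}$ with $x\in\mathcal{L}$.

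The main obstacle is the soundness step, that $\Pi\subseteq\hat{B}$. Lemma \ref{correct} is stated for the extension obtained by adding \emph{new} names to $[M;\mathcal{L}]$. Here we instead need it for the system $[M(\mathcal{D});\mathcal{L}(D)]$, whose lists already contain the names. I would handle this in one of two ways. The first is to observe that the proof of \cite[Lemma 3.5]{Ku4} only uses Conditions (4)--(7) relative to $\hat{\mathcal{L}}_*$, so it applies verbatim. The second is to add a second copy of names $\alpha'_d$, interpret them like $\alpha_d$, apply Lemma \ref{correct}, and note that valid formulas not containing the new names are valid in the original sense.
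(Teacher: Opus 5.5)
Your proof is correct. The paper gives no argument of its own here: it only cites \cite[Section 4, Theorem 4.1]{Ku4} and remarks that countability of $[M;\mathcal{L}]$ is not used there. You instead give a self-contained argument. Soundness yields $\Pi(M(\mathcal{D});\mathcal{L}(D))=\hat B$. Completeness and consistency then follow from Condition 6.3, and the Henkin witnesses $\alpha_{\mathcal{D}(\lambda)}$ (or an arbitrary name when $\mathcal{D}(\exists x F)=\bot$) come from Condition 6.4 and Lemma \ref{structure_lemma}(b).

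This is presumably the argument behind the citation. Your version has two advantages. It makes visible why countability plays no role. It also addresses the one point the citation passes over: Lemma \ref{correct} is stated only for basis axioms without names, so it has to be rerun for the basis axioms $\hat B$, or applied after adding a second copy of names. Either of your two fixes works.
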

\begin{proof}
The theorem and its proof is given in
\cite[Section 4, Theorem 4.1]{Ku4}.
The proof given there does not make use of the countability
of the formal language $[M;\mathcal{L}]$ and hence holds in general.
\end{proof}
In general, even for a countably infinite 
language $[M;\mathcal{L}]$, we are not able
to determine all formulas in $[M;\mathcal{L}]$ 
which are valid in $\mathcal{D}$.
Therefore, Theorem \ref{complete_henkin_model}
is not a constructive statement. \\

Consider the formal system 
$[M';\mathcal{L}']$ from Definition \ref{extensions}
which extends a formal system $[M;\mathcal{L}]$.
The next definition and the following theorem provide a justification for calling $[M;\mathcal{L}]$ 
a \textit{generalization} of $[M';\mathcal{L}']$, by showing how any model of $[M';\mathcal{L}']$ can be restricted 
to a model of $[M;\mathcal{L}]$.

\begin{defin} \textit{Restriction of a structure}\\ 
\label{def_restriction}
Given are two formal mathematical systems $[M;\mathcal{L}]$ and 
$[M';\mathcal{L}']$ with $M=[A; P; B]$, $M'=[A'; P'; B']$ and
$A \subseteq A'$, 
$P \subseteq P'$,
$\mathcal{L} \subseteq \mathcal{L}'$\,.
Let $\mathcal{D}'$ be a structure for $[M';\mathcal{L}']$ 
with universe $\mathcal{D}_*$ and the set of names 
$\mathcal{N}=\{ \alpha_d : d \in \mathcal{D}_* \}\,$.
According to Condition (3) and \cite[Corollary (4.9)]{Ku}
we add the names
to $[M;\mathcal{L}]$ and $[M';\mathcal{L}']$ and obtain
\begin{align*}
[\hat{M'};\hat{\mathcal{L}'}]=
[M'_{A' \cup \mathcal{N}};\mathcal{L}'_{A' \cup \mathcal{N}}]\,,
[\hat{M};\hat{\mathcal{L}}]=
[M_{A \cup \mathcal{N}};\mathcal{L}_{A \cup \mathcal{N}}]\,.
\end{align*}
Then we obtain a \textit{structure} for
$[M;\mathcal{L}]$, the \textit{restriction} 
$\mathcal{D}=\mathcal{D}'|_{[M;\mathcal{L}]}$\,:
\begin{itemize}
\item[(i)] The mapping $\mathcal{D} : \hat{\mathcal{L}}_* \to \mathcal{D}_*$ is the restriction of
$\mathcal{D}' : \hat{\mathcal{L}'}_* \to \mathcal{D}_*$
to $\hat{\mathcal{L}}_*$. We see that $\mathcal{D}(\alpha_d)=d$
for all $d \in \mathcal{D}_*$, and $\mathcal{D}$ is surjective.
Now $\mathcal{D}$ satisfies Condition (4),
since $\hat{\mathcal{L}}=\mathcal{L}_{A \cup \mathcal{N}}$ 
is complete under substitutions due to \cite[Corollary (4.9)]{Ku}.
\item[(ii)] For $\mathcal{D}$ we only assign the predicates 
$p_n \subseteq \mathcal{D}_*^n$ to the predicate symbols $p \in P$ and to each $n \in \setN_0$.
\item[(iii)] We have an extension of the function $\mathcal{D}$ which assigns a truth value to each closed formula $F$ of $[\hat{M};\hat{\mathcal{L}}]$ due to $\mathcal{D}(F)=\mathcal{D}'(F)$.
Since every formula of $[\hat{M};\hat{\mathcal{L}}]$ is also a formula of $[\hat{M'};\hat{\mathcal{L}'}]$, this is a well defined extension.
\end{itemize}
\end{defin}

In this definition, $\mathcal{D}'$ and $\mathcal{D}$  both have the same universe $\mathcal{D}_*$ and the same set of names $\mathcal{N}$, naming the individuals in both structures in the same way.
Now we have an important generalization of
\cite[Chapter 4.2, Lemma 1]{Shoenfield}.

\begin{thm}\label{thm_restriction}
Given are two formal mathematical systems $[M;\mathcal{L}]$ and 
$[M';\mathcal{L}']$. Let $[M';\mathcal{L}']$ be an extension of $[M;\mathcal{L}]$ 
and let $\mathcal{D}'$ be a model for $[M';\mathcal{L}']$. Then the restriction
$\mathcal{D}=\mathcal{D}'|_{[M;\mathcal{L}]}$ is a model for $[M;\mathcal{L}]$.
\end{thm}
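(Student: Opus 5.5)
We first note that by Definition \ref{def_restriction} the restriction $\mathcal{D}=\mathcal{D}'|_{[M;\mathcal{L}]}$ is already a \emph{structure} for $[M;\mathcal{L}]$. So only Condition (8) remains: every basis axiom $F \in B$ must be valid in $\mathcal{D}$. The plan is to move $F$ into the bigger system, use the correctness Lemma \ref{correct} for the model $\mathcal{D}'$ there, and then come back down to $\mathcal{D}$.

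First, let $F \in B$. Then $F \in \Pi(M;\mathcal{L})$, since basis axioms are provable. Because $[M';\mathcal{L}']$ is an extension of $[M;\mathcal{L}]$ (Definition \ref{extensions}(a)), we get $F \in \Pi(M';\mathcal{L}')$. The systems $[M';\mathcal{L}']$ and $[\hat{M'};\hat{\mathcal{L}'}]$ have the same basis axioms, the same predicate symbols and $\mathcal{L}' \subseteq \hat{\mathcal{L}'}$. Hence every proof in $[M';\mathcal{L}']$ is also a proof in $[\hat{M'};\hat{\mathcal{L}'}]$: the restrictions (iv), (v) are weaker for the larger list set, and the logical axioms remain axioms. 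Therefore $F \in \Pi(\hat{M'};\hat{\mathcal{L}'})$. Since $\mathcal{D}'$ is a model for $[M';\mathcal{L}']$, Lemma \ref{correct} gives that $F$ is valid in $\mathcal{D}'$.

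Second, we transfer validity from $\mathcal{D}'$ to $\mathcal{D}$. Let $\mbox{free}(F)=\{x_1,\ldots,x_m\}$. By Condition (7), validity in $\mathcal{D}$ means
$$\mathcal{D}\left(F\frac{\lambda_1}{x_1}\ldots\frac{\lambda_m}{x_m}\right)=\top \quad \mbox{for all } \lambda_1,\ldots,\lambda_m \in \hat{\mathcal{L}}_*.$$
Each such substituted formula is a closed formula of $[\hat{M};\hat{\mathcal{L}}]$, because $\hat{\mathcal{L}}$ is closed under substitution by \cite[Corollary (4.9)]{Ku}. By item (iii) of Definition \ref{def_restriction}, its $\mathcal{D}$-value equals its $\mathcal{D}'$-value. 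Moreover $\hat{\mathcal{L}}_* \subseteq \hat{\mathcal{L}'}_*$, so these instances are among those covered by validity of $F$ in $\mathcal{D}'$, and each has value $\top$. Hence $F$ is valid in $\mathcal{D}$, and $\mathcal{D}$ is a model.

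The main obstacle is the first step, namely justifying that $\Pi(M';\mathcal{L}')\subseteq\Pi(\hat{M'};\hat{\mathcal{L}'})$, i.e.\ that adding the names keeps all proofs valid. I would argue by induction on proof length using the rules (a)--(d) of \cite[(3.13)]{Ku}. Alternatively, I would cite \cite[Corollary (4.9)]{Ku}, which I expect shows that $[\hat{M'};\hat{\mathcal{L}'}]$ is an extension of $[M';\mathcal{L}']$.

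A secondary point to check is that the predicates $p_n$ assigned in $\mathcal{D}$ for $p\in P$ coincide with those of $\mathcal{D}'$. This guarantees that the truth values in item (iii) agree with the recursive definition in Condition (6). It follows by induction on formulas, since the clauses 6.1--6.4 only ever refer to lists in $\hat{\mathcal{L}}_*$, to predicates of symbols occurring in the formula, and to the universe $\mathcal{D}_*$, which is shared. For the quantifier clause 6.4 one uses Lemma \ref{structure_lemma}(b) to replace lists in $\hat{\mathcal{L}'}_*$ by names $\alpha_d \in \hat{\mathcal{L}}_*$.
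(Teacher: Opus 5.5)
Your proof is correct and is essentially the paper's argument. The basis axiom $F$ is provable in $[M';\mathcal{L}']$ because $[M';\mathcal{L}']$ is an extension, hence provable in $[\hat{M'};\hat{\mathcal{L}'}]$, hence valid in $\mathcal{D}'$ by Lemma \ref{correct}, and validity passes to $\mathcal{D}$ via Condition (7) and $\hat{\mathcal{L}}_* \subseteq \hat{\mathcal{L}'}_*$. Your extra checks, that adding names preserves proofs and that item (iii) of Definition \ref{def_restriction} agrees with Condition (6), are points the paper takes for granted, and your arguments for them are sound.
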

\begin{proof}
Due to Definition \ref{def_restriction} we already have the structure
$\mathcal{D}=\mathcal{D}'|_{[M;\mathcal{L}]}$ for $[M;\mathcal{L}]$.
Let $F$ be a basis axiom of $[M;\mathcal{L}]$. Then $F$ is provable
in $[M;\mathcal{L}]$ as well as in $[M';\mathcal{L}']$, since
$[M';\mathcal{L}']$ is an extension of $[M;\mathcal{L}]$.
Recall that both structures $\mathcal{D}'$, $\mathcal{D}$
have the same universe $\mathcal{D}_*$ and the same set of names 
$\mathcal{N}=\{\alpha_d\,:\,d \in \mathcal{D}_* \}$.
But $F$ is also provable in the extension
$[\hat{M'};\hat{\mathcal{L}'}]$ of $[M';\mathcal{L}']$.
We obtain from Lemma \ref{correct} that $F$ is valid 
in $\mathcal{D}'$, make use of Condition (7) and see from 
$\hat{\mathcal{L}}_* \subseteq \hat{\mathcal{L}'}_*$
that $F$ is also valid in $\mathcal{D}$.
Hence the restriction $\mathcal{D}$ 
is a model for $[M;\mathcal{L}]$. 
\end{proof}

\begin{rem}
If $[M;\mathcal{L}]$ or its extension $[M';\mathcal{L}']$
is a contradictory formal system, then we do not have a model
$\mathcal{D}'$ for $[M';\mathcal{L}']$.
\end{rem}

We have seen that we can extend the most important basic concepts of classical model theory to our new, generalized model theory.
The mathematical structures we have defined in our model theory can, in principle, all be obtained using set theory.
Axiomatic set theory provides some generally accepted rules for dealing with sets, and it intends to lay a foundation of mathematics by using only the primitive terms ``set" 
and ``membership".
More specific mathe\-matical structures are defined using these primitive terms.
Such a commonly accepted foundation of mathe\-ma\-tics is the Zermelo-Fraenkel set theory ZFC with the axiom of choice,
see Jech \cite{J} and Shoenfield \cite[Chapter 9]{Shoenfield}.
ZFC is only dealing with sets whose members are sets again.

In \cite{Kuset} we have presented a generalization of ZFC, 
starting with a fragment of axiomatic set theory
called RST, for reduced set theory.
As in ZFC we are only dealing with sets whose members are sets again.
However, unlike ZFC, RST is a simpler set theory that is better suited to our model theory. 

Next we will describe the basic principles of RST and its benefits:\\
\noindent
A set $U$ is called \textit{transitive} iff $Y \subseteq U$ 
for all $Y \in U$.\\
By $\mathcal{P}(Y) =\{ V : V \subseteq Y\}$
we denote the power set of $Y$.\\
Due to \cite[Definition 2.1]{Kuset} 
we say a set $U$ is \textit{subset-friendly} iff 
\begin{itemize}
\item[1.] $\emptyset \in U$\,.
\item[2.] $U$ is transitive\,.
\item[3.] For all $Y \in U$ we have $\mathcal{P}(Y) \in U$\,.
\item[4.] For all $Y, Z \in U$ we have a transitive set\\ 
$V \in U$ with $\{Y,Z\} \subseteq V$\,.
\end{itemize}
Now we are listing the six principles 
according to which we are dealing with sets in RST. 
For sets $A$, $B$, $U$, $V$, $Y$ these are given by
\begin{itemize}
\item[P1.] \textit{Principle of extensionality.}
If $A$ and $B$ have the same elements, then $A=B$.
\item[P2.] \textit{Subset principle.}
If $\mathcal{F}$ is a property which may depend on previously given sets, then we can form the subset of $A$ given by
$U = \{Y :\,\mbox{there~holds~} Y \in A 
\mbox{~and~} Y \mbox{~has~property~}\mathcal{F}\}\,.$\\
Especially the empty set $\emptyset$ can be obtained from this principle.
\item[P3.] \textit{Principle of regularity.}
If $U$ is not the empty set,
then we have $Y \in U$ with $U \cap Y=\emptyset$.
\item[P4.] \textit{Principle for pairing of sets.}
If $A$ and $B$ are given, then we can find a set $U$
with $A \in U$ and $B \in U$. 
We can combine this with (P2)
to form $U=\{A, B\}$.
\item[P5.] \textit{Principle for subset-friendly sets.}
If $A$ is given, then we have a subset-friendly set $U$
with $A \in U$.
\item[P6.] \textit{Principle of choice.}
If $U$ has only nonempty and pairwise disjoint elements
then we can find a set $Y$ with the following property:
For every member $A \in U$ there exists exactly one set $V$ with
$Y \cap A = \{V\}$.
\end{itemize}
The novel feature of (P5) is that it contains the set $A$ as 
para\-meter. Hence we can use it step by step.
We will first provide a subset-friendly set $U$ with $A=\emptyset \in U$.
Then we can apply (P5) to $A=U$ again, and so on.
The correctness of (P5) is guaranteed by \cite[Theorem 2.5]{Kuset}.
In this way we have a sufficiently large set as background available. 
Within this set we can perform the required set operations, and we apply the subset axioms directly instead of the replacement axioms.

A subset-friendly set $U$ satisfies the following closure properties:
\begin{itemize}
\item If $A \in U$, then $\cup(A) \in U$,
$\mathcal{TC}(A) \in U$ and $\mathcal{P}(A) \in U$,
\item If $A \subseteq V$ and $V \in U$, then $A \in U$,
\item If $A, B \in U$, then $A \cup B \in U$, $A \cap B \in U$
and $A \setminus B \in U$,
\item If $A_1, \ldots, A_n \in U$, then $\{ A_1, \ldots, A_n\} \in U$,
\item If $A_1, \ldots, A_n \in U$, 
then $A_1 \times \ldots \times A_n \in U$.
\end{itemize}
Here $\cup(A)$ denotes the union of a set $A$ and
$\mathcal{TC}(A)$ its transitive closure.
We have shown in \cite{Kuset} that all these closure properties of the
subset-friendly sets are formally provable in RST.

The new set theory RST now offers us the following advantages:
\begin{itemize}
\item[1.] RST is designed for pure and well-founded sets according to (P1) and (P3). It follows 
from \cite[Theorem 5.1]{Kuset} that RST admits tran\-si\-tive 
well-founded models which can be extended by adding step by step appropriate new axioms to RST. Then the former transitive model just becomes a transitive set and a member of the extended model. In this way we can extend RST and its transitive models, whenever this is needed.
\item[2.] On the other hand, we have seen in \cite{Kuset} that even the simplest
models of RST are large and rich enough in order to formalize
most parts of classical mathematics. Hence we can also study
axiomatic set theory within the theory of models, 
using universal sets instead of proper classes.
\item[3.] Axiomatic set theory cannot provide an absolutely valid description for large and especially uncountable ordinal numbers,
see \cite[Theorem 5.1, Theorem 5.3]{Ku4}. 
By using the reduced set theory RST, we do not make existence statements for such ordinals.
In \cite[Theorem 5.1]{Kuset} we obtained transitive well-founded models for RST whose ordinals are countable and definable in an absolute sense, i.e. they have the same meaning in all these models.
\item[4.] The subset-friendly theories in 
\cite[Definition 4.2]{Kuset} are extensions of RST with new axioms and new symbols in the formulas. They are well suited for formulating general axiomatic theories in which set theo\-ry is available in the background. 
A subset-friendly theory is an extension of RST 
in the sense of Definition \ref{extensions} such that the subset axioms that correspond to (P2) remain valid,
but with new symbols in the formulas.

In the paper \cite{Kuset}, we also presented some 
fundamental results to obtain conservative extensions 
of RST according to Definition \ref{extensions}(b). 
\end{itemize}

In our final example we briefly touch on category theory.
We make use of Tom Leinster's textbook \cite[Section 1]{TL}:
\begin{exa}\label{cat} 
The formal system CAT for categories
\begin{itemize}
\item[1.] We put $A=\{1,d,c\}$ and 
$P=\{\mbox{Ob}, \mbox{Ar}\}$.
\item[2.] Let $\mathcal{L}$ be the set of lists generated by the following rules:
\begin{itemize}
\item[$\bullet$] $z \in \mathcal{L}$ for all variables $z \in X$.
\item[$\bullet$]  For all $\lambda \in \mathcal{L}$ we have
$1(\lambda) \in \mathcal{L}$,
$d(\lambda) \in \mathcal{L}$ and
$c(\lambda) \in \mathcal{L}$.
\item[$\bullet$]  For all $\lambda, \mu \in \mathcal{L}$ we have
$\lambda \mu \in \mathcal{L}$. 
\end{itemize}
\item[3.] Let $x,u,v \in X$ be three distinct variables.
\end{itemize}

We define the formal mathematical system 
$\mbox{CAT}=[M;\mathcal{L}]$
with\\ 
$M=[A;P;B]$ and $B$ consisting on the following ten\\
Basis Axioms for $\mbox{CAT}$:
\begin{itemize}
\item[C1.] $\begin{displaystyle}
\to ~ \mbox{Ar}\,v ~\, \mbox{Ob}\,d(v)
\end{displaystyle}$
\item[C2.] $\begin{displaystyle}
\to ~ \mbox{Ar}\,u ~\, \mbox{Ob}\,c(u)
\end{displaystyle}$
\item[C3.] $\begin{displaystyle}
\to ~ \mbox{Ar}\,v ~ \to ~ \mbox{Ar}\, u
\to ~ \sim \, d(v),c(u) ~\, \mbox{Ar}\, vu
\end{displaystyle}$
\item[C4.] $\begin{displaystyle}
\to ~ \mbox{Ar}\,v ~ \to ~ \mbox{Ar}\, u
\to ~ \sim \, d(v),c(u) ~ \sim\, d(vu), d(u)
\end{displaystyle}$
\item[C5.] $\begin{displaystyle}
\to ~ \mbox{Ar}\,v ~ \to ~ \mbox{Ar}\, u
\to ~ \sim \, d(v),c(u) ~ \sim\, c(vu), c(v)
\end{displaystyle}$
\item[C6.] $\begin{displaystyle}
\to ~ \mbox{Ob}\,x ~\, \mbox{Ar}\,1(x)
\end{displaystyle}$
\item[C7.] $\begin{displaystyle}
\to ~ \mbox{Ob}\,x ~ \sim\,d(1(x)),x
\end{displaystyle}$
\item[C8.] $\begin{displaystyle}
\to ~ \mbox{Ob}\,x ~ \sim\,c(1(x)),x
\end{displaystyle}$
\item[C9.] $\begin{displaystyle}
\to ~ \mbox{Ar}\,v ~ \sim\,v 1(d(v)),v
\end{displaystyle}$
\item[C10.] $\begin{displaystyle}
\to ~ \mbox{Ar}\,u ~ \sim\,1(c(u))u,u
\end{displaystyle}$

\end{itemize}
Let $\mathcal{D}$ be a model of $\mbox{CAT}$ with universe 
$\mathcal{D}_*$ and the set of names
$\mathcal{N}=\{\alpha_a\,:\,a \in \mathcal{D}_*\}$\,.
We make use of Condition (5)
and have $\mbox{Ob}_1 \subseteq \mathcal{D}_*$,
$\mbox{Ar}_1 \subseteq \mathcal{D}_*$.
Since $\mbox{Ob}$ and $\mbox{Ar}$
only occur as 1-ary predicate symbols in the basis axioms of CAT,
we can use \cite[Section 3.4]{Ku2} and may assume 
for the model $\mathcal{D}$ that
\begin{align*}
\mbox{Ob}_0=\mbox{Ar}_0=\emptyset\,,\quad
\mbox{Ob}_n=\mbox{Ar}_n=\emptyset \quad \mbox{for~all~} 
n \geq 2\,.
\end{align*}
Let $U$ be a subset-friendly set 
with $\mathcal{D}_* \in U$.
In order to obtain a category $\mathscr{A}$ 
from $\mathcal{D}$ in $U$ we proceed as follows:
We define the set of objects
$\begin{displaystyle}
ob(\mathscr{A})=\{a \in \mathcal{D}_*\,:\, 
\mathcal{D}(\mbox{Ob}\,\alpha_a)=\top\,\}=\mbox{Ob}_1\,.
\end{displaystyle}$\\
For any two $a,b \in ob(\mathscr{A})$ we also have a set of arrows
\begin{align*}
\mathscr{A}(a,b)=\{f \in \mathcal{D}_* :  
\mathcal{D}(\mbox{Ar}\,\alpha_f)=
\mathcal{D}(\sim d(\alpha_f),\alpha_a)=
\mathcal{D}(\sim c(\alpha_f),\alpha_b)=\top\,\}\,.
\end{align*}
Note that $ob(\mathscr{A}) \in U$ and
$\mathscr{A}(a,b) \in U$, since 
$\mathscr{A}(a,b) \subseteq \mbox{Ar}_1 \subseteq \mathcal{D}_*$.
For each three objects $a,b,\tilde{c} \in ob(\mathscr{A})$ we have an own composition function
\begin{align*}
\circ : \mathscr{A}(b,\tilde{c}) \times \mathscr{A}(a,b) \to 
\mathscr{A}(a,\tilde{c}) ~\mbox{with}~
\circ(g,f) = \mathcal{D}(\alpha_g \alpha_f)\,.
\end{align*}
For this reason we may shortly write $gf$ instead of $\circ(g,f)$.
The composition function is empty if and only if $\mathscr{A}(a,b)=\emptyset$
or $\mathscr{A}(b,\tilde{c})=\emptyset$.
Finally, for each $a \in ob(\mathscr{A})$ we have an own identity on $a$, namely
\begin{align*}
1_a = \mathcal{D}(1(\alpha_a)) \in \mathscr{A}(a,a) \,.
\end{align*}
We conclude our example with the following remarks:
\begin{itemize}
\item[A.] It may happen that $ob(\mathscr{A})=\emptyset$.
Then we obtain the so called empty category.
But the formula $\exists x \,\sim\,x,x$ 
is provable in \mbox{CAT}.
Then $\mathcal{D}(\exists x \,\sim\,x,x)=\top$
from Lemma \ref{correct} and hence $\mathcal{D}_* \neq \emptyset$.
\item[B.] Assume that $a,a',b,b' \in ob(\mathscr{A})$
and that $f \in \mathscr{A}(a,b) \cap \mathscr{A}(a',b')$.
Then we obtain from Condition (6.1) 
and Lemma \ref{structure_lemma}(a):
\begin{align*}
\mathcal{D}(\sim d(\alpha_f),\alpha_a)=
\mathcal{D}(\sim d(\alpha_f),\alpha_{a'})=\top\,,\\
\mathcal{D}(d(\alpha_f))=\mathcal{D}(\alpha_a)=
\mathcal{D}(\alpha_{a'})=a=a'\,,
\end{align*}
and similarly
\begin{align*}
\mathcal{D}(\sim c(\alpha_f),\alpha_b)=
\mathcal{D}(\sim c(\alpha_f),\alpha_{b'})=\top\,,\\
\mathcal{D}(c(\alpha_f))=\mathcal{D}(\alpha_b)=
\mathcal{D}(\alpha_{b'})=b=b'\,.
\end{align*}
\item[C.] For $\mbox{CAT}=[M;\mathcal{L}]$ we obtain the formal system $[\hat{M};\hat{\mathcal{L}}]$ by adding the names in 
$\mathcal{N}$ to $[M;\mathcal{L}]$, see Condition (3). 
Let $\hat{\mathcal{L}}_*$ be the set of all terms 
$\lambda \in \hat{\mathcal{L}}$ without variables.
We make use of the notion $\mbox{SbL}$
for substitution in lists from \cite[(3.5)]{Ku}
and we obtain for all $\lambda, \mu \in \hat{\mathcal{L}}_*$
from Condition (4) with two variables $x,y \in X$:
\begin{align*}
\mathcal{D}(\lambda \mu)=\mathcal{D}(\mbox{SbL}(x \mu;\lambda;x))
=\mathcal{D}(\alpha_{\mathcal{D}(\lambda)}\mu)\\
=\mathcal{D}(\mbox{SbL}(\lambda y;\mu;y))
=\mathcal{D}(\lambda \alpha_{\mathcal{D}(\mu)})\\
=\mathcal{D}(\mbox{SbL}(\alpha_{\mathcal{D}(\lambda)}y;\mu;y))
=\mathcal{D}(\alpha_{\mathcal{D}(\lambda)}\alpha_{\mathcal{D}(\mu)})=\mathcal{D}(\alpha_{\mathcal{D}(\lambda\mu)})\,.\\
\end{align*}
\item[D.] We make use of Remark C and Lemmas \ref{structure_lemma},
\ref{correct} and obtain from \mbox{CAT} 
that $h(gf)=(hg)f$ 
for all $a_1,a_2,a_3,a_4 \in ob(\mathscr{A})$, for all
$f \in \mathscr{A}(a_1,a_2)$, $g \in \mathscr{A}(a_2,a_3)$,
$h \in \mathscr{A}(a_3,a_4)$ (associativity law) as well as
$r 1_a=r=1_b r$ for all $a,b \in ob(\mathscr{A})$
and for all $r$ with $r \in \mathscr{A}(a,b)$ (identity laws).
Hence $\mathscr{A}$ is indeed a category 
in a subset-friendly set $U$. 
In this way, any model $\mathcal{D}$ for \mbox{CAT}
in $U$ represents a category $\mathscr{A}$ in $U$.

We will not require any further properties for $U$. 
In particular, $U$ need not be a Grothendieck universe, which brings into play the more general concept of subset-friendly sets.

\item[E.] 
Using subset-friendly sets $U \in U'$ from (P5) and a formal system including basis \mbox{axioms}, for example 
an extension $[M';\mathcal{L}']$ of \mbox{CAT}, 
we consider \textit{all} structure-preserving 
maps $\psi : a \to b$ from Theorem \ref{homthm1} 
between the \textit{models} of the given formal system 
with $a, b, \psi \in U$. These are the arrows
of a special category $\mathscr{A}_{|}'$ in $U'$, and the domain $a$ 
as well as the codomain $b$ of each such function $\psi$
serve as the objects of $\mathscr{A}_{|}'$. 
The prescribed formal system may be contradictory,
in which case $\mathscr{A}_{|}'$ is the empty category from Remark A.

We note that all basis axioms of $\mbox{CAT}$ have the 
special form \mbox{$\to F_1 \cdots \to F_{n-1} F_n$} with $n \geq 2$
and prime formulas $F_1, \cdots, F_n$.
This fact can be used in order to obtain
a model $\mathcal{D}$ for \mbox{CAT}
in $U'$ which represents the category $\mathscr{A}_{|}'$, but
we will not go into tech\-nical details. 
Instead we will mention some useful properties which can be incorporated into 
the definition of the model $\mathcal{D}$.
Let $[M(\mathcal{D});\mathcal{L}(D)]$ 
be the Henkin-system from Definition \ref{henkin_model}
for $[M;\mathcal{L}]=\mbox{CAT}$ and let $\mathcal{N}$ be the set of names for $\mathcal{D}$. Then we have a parti\-cular
name $| \in \mathcal{N}$ with $\mathcal{D}(\,|\,)=U \in U'$
and $\mathcal{D}(\kappa) \in U$ for all $\kappa \in \mathcal{N} \setminus \{\,|\,\}$ which satisfies following properties: 
If we have a variable-free list $\lambda \in \mathcal{L}(D)_*$ 
such that $\mathcal{D}(\lambda)$ is not defined in the category 
$\mathscr{A}_{|}'$, then we have
$\mathcal{D}(\lambda)=\mathcal{D}(\,|\,)=U$\,.
Here $\mathcal{D}(\lambda)$ 
is defined in $\mathscr{A}_{|}'$ iff it is 
an arrow or object in $\mathscr{A}_{|}'$.
If $\lambda$ contains the symbol $\,|\,$,
then it is required that $\mathcal{D}(\lambda)$ 
is not defined in $\mathscr{A}_{|}'$. 
Especially, if we have any two lists 
$\lambda, \mu \in \mathcal{L}(D)_*$ such that 
$\circ (\mathcal{D}(\lambda),\mathcal{D}(\mu))$
is not defined in $\mathscr{A}_{|}'$, then we must have
$\mathcal{D}(\lambda \mu)=\mathcal{D}(\,|\,)=U$\,.
Finally, there must hold $\mathcal{D}(F)=\bot$ 
for any variable-free prime formula 
$F=\mbox{Ob} \, \lambda$ or 
$F=\mbox{Ar} \, \lambda$ of 
$[M(\mathcal{D});\mathcal{L}(D)]$ 
which contains the symbol $\,|\,$. 
\item[F.] In order to formally define special types of categories 
we will now consider an extension $[M';\mathcal{L}']$ of \mbox{CAT}.
Let $\psi' : \mathcal{D}_{1,*} \to \mathcal{D}_{2,*}$ 
be a structure-preserving map between models 
$\mathcal{D}'_{1}$ and $\mathcal{D}'_{2}$
of $[M';\mathcal{L}']$ with universes 
$\mathcal{D}_{1,*}$ and $\mathcal{D}_{2,*}$
and corresponding set of names 
$\mathcal{N}_1=\{\alpha_{\tilde{d}}\,:
\,\tilde{d} \in \mathcal{D}_{1,*}\}\,,
\mathcal{N}_2=\{\beta_{\tilde{d}'}\,:
\,\tilde{d}' \in \mathcal{D}_{2,*}\}\,.$
As in the previous remark we assume that 
$\psi', \mathcal{D}_{1,*},\mathcal{D}_{2,*} \in U$
with $U \in U'$ and two prescribed subset-friendly sets $U$, $U'$.\\
Let $\mathcal{D}_1=\mathcal{D}'_{1}|_{[M;\mathcal{L}]}$ and 
$\mathcal{D}_2=\mathcal{D}'_{2}|_{[M;\mathcal{L}]}$ 
with $[M;\mathcal{L}]=\mbox{CAT}$ be the restrictions to models for $\mbox{CAT}$ from Theorem \ref{thm_restriction}. 
Let $[\hat{M}_1;\hat{\mathcal{L}_1}]$ and
$[\hat{M}_2;\hat{\mathcal{L}_2}]$ result from
$\mbox{CAT}$ by adding the names in $\mathcal{N}_1$
and $\mathcal{N}_2$ to $\mbox{CAT}$, respectively.
Then $\psi=\psi'$ is also a structure-preserving map between 
$\mathcal{D}_{1}$ and $\mathcal{D}_{2}$, with a corresponding
map $\psi_* : \hat{\mathcal{L}}_{1,*} \to \hat{\mathcal{L}}_{2,*}$
which is only defined for the variable-free lists 
of $\hat{\mathcal{L}}_{1}$.
Let $\mathscr{A}$ be the category represented by
$\mathcal{D}_{1}$ in $U$ and let $\mathscr{B}$ be the category represented by $\mathcal{D}_{2}$ in $U$. 
Let $Ob_{1,1}=ob(\mathscr{A})$ be the set of objects 
of $\mathscr{A}$ and let $Ob_{1,2}=ob(\mathscr{B})$ 
be the set of objects of $\mathscr{B}$.
Let $Ar_{1,1}$ be the set of arrows
of $\mathscr{A}$ and let $Ar_{1,2}$ 
be the set of arrows of $\mathscr{B}$.
Then we have $Ob_{1,1}, Ob_{1,2}, Ar_{1,1}, Ar_{1,2} \in U$,
and $F=\psi|_{Ob_{1,1} \cup Ar_{1,1}}$
gives a well-defined \textit{functor} 
$F : \mathscr{A} \to \mathscr{B}$
in the sense of \cite[Section 1.2]{TL}.
In our example we can also write $F$ as a ``usual map"
$F : Ob_{1,1} \cup Ar_{1,1} \to Ob_{1,2} \cup Ar_{1,2}$.
\textit{All} these functions $F$ give the arrows
of a category $\mathscr{A}'$ in $U'$, whereas domain 
and codomain of each such function
serve as the objects of $\mathscr{A}'$. 
Note that $Ob_{1,1} \cap Ar_{1,1}=\emptyset$ is not required.
\item[G.] Interestingly, the basis axioms of $\mbox{CAT}$
can also be used to construct a recursive system: Starting with the ten basis axioms of $\mbox{CAT}$ as basis $R$-axioms and with finite extensions of $A$ and $P$, we may add finitely many appropriate basis $R$-axioms \mbox{according} 
to \cite[Section 1]{Ku}. In this way we can also obtain 
models of $\mbox{CAT}$ from \cite[Theorem 3.5, Theorem 3.6]{Ku2}. 
\end{itemize}
\end{exa}

The axiomatic method can help us eliminate erroneous  assumptions about mathematical structures that manifest as contradictions within formal mathematical systems. Moreover, a sound axio\-matic framework is necessary to make the underlying principles of a theory transparent to all its users.

But where do we get mathematical structures and the theories that describe their most important properties?
The driving force behind the development of mathematical theories is described in a philosophical attitude called Plato\-nism. Here we quote from the introduction to Skolem's lecture notes \cite[Section 1]{Skolem}:\\

%%%%%%%%%%%%%%%%%%%%%%%%%%%%%%%%%%
``There is one fact to which I would like to call attention.  Most of mathematics and perhaps above all the classical set theory has been developed in accordance with the philosophical attitude called Plato\-nism. This standpoint means that we consider the mathematical 
objects as existing before and independent of our actual thinking. Perhaps an illustrating way of expressing it is to say that when we are thinking about mathematical objects we are looking at eternal preexisting objects.
It seems clear that the word ``existence'' according to Platonism must have an absolute meaning so that everything we talk about shall either exist or not in a definite way. This is the philosophical background for classical mathematics generally and perhaps in particular for classical set theory. Being aware of this, Cantor explicitly cites Plato. Everybody is used to saying that a mathematical fact has been discovered, not that it has been invented. That shows our natural tendency towards Platonism.'' 

For a further discussion see also Putnam \cite{Putnam}.

\end{document}